\DeclareMathAlphabet{\mathcal}{OMS}{cmsy}{m}{n}
\def\ps@pprintTitle{%
 \let\@oddhead\@empty
 \let\@evenhead\@empty
 \def\@oddfoot{\centerline{\thepage}}%
 \let\@evenfoot\@oddfoot}
\newcommand{\bbC}{\mathbb{C}}
\newcommand{\bbF}{\mathbb{F}}
\newcommand{\bbR}{\mathbb{R}}
\newcommand{\bbZ}{\mathbb{Z}}
\newcommand{\bfA}{\mathbf{A}}
\newcommand{\bfe}{\mathbf{e}}
\newcommand{\bfE}{\mathbf{E}}
\newcommand{\bff}{\mathbf{f}}
\newcommand{\bfF}{\mathbf{F}}
\newcommand{\bfI}{\mathbf{I}}
\newcommand{\bfJ}{\mathbf{J}}
\newcommand{\bfx}{\mathbf{x}}
\newcommand{\bfX}{\mathbf{X}}
\newcommand{\bfy}{\mathbf{y}}
\newcommand{\bfY}{\mathbf{Y}}
\newcommand{\bfZ}{\mathbf{Z}}
\newcommand{\bfone}{\boldsymbol{1}}
\newcommand{\bfzero}{\boldsymbol{0}}
\newcommand{\bfphi}{\boldsymbol{\varphi}}
\newcommand{\bfpsi}{\boldsymbol{\psi}}
\newcommand{\bfPhi}{\boldsymbol{\Phi}}
\newcommand{\bfPsi}{\boldsymbol{\Psi}}
\newcommand{\bfdelta}{\boldsymbol{\delta}}
\newcommand{\calB}{\mathcal{B}}
\newcommand{\calD}{\mathcal{D}}
\newcommand{\calG}{\mathcal{G}}
\newcommand{\calV}{\mathcal{V}}
\newcommand{\TD}{{\operatorname{TD}}}
\newcommand{\Tr}{\operatorname{Tr}}
\newcommand{\ETF}{{\operatorname{ETF}}}
\newcommand{\BIBD}{{\operatorname{BIBD}}}
\newcommand{\abs}[1]{|{#1}|}
\newcommand{\paren}[1]{({#1})}
\newcommand{\bigparen}[1]{\bigl({#1}\bigr)}
\newcommand{\bigbracket}[1]{\bigl[{#1}\bigr]}
\newcommand{\set}[1]{\{{#1}\}}
\newcommand{\bigset}[1]{\bigl\{{#1}\bigr\}}
\newcommand{\norm}[1]{\|{#1}\|}
\newcommand{\ip}[2]{\langle{#1},{#2}\rangle}
\newtheorem{theorem}{Theorem}[section]
\newtheorem{lemma}[theorem]{Lemma}
\newtheorem{corollary}[theorem]{Corollary}
\newtheorem{conjecture}[theorem]{Conjecture}
\theoremstyle{definition}
\newtheorem{definition}[theorem]{Definition}
\newtheorem{remark}[theorem]{Remark}
\begin{document}
\begin{frontmatter}
\title{Equiangular tight frames from group divisible designs}

\author[AFIT]{Matthew Fickus}
\author[SDSU]{John Jasper}

\address[AFIT]{Department of Mathematics and Statistics, Air Force Institute of Technology, Wright-Patterson AFB, OH 45433}
\address[SDSU]{Department of Mathematics and Statistics, South Dakota State University, Brookings, SD 57007}

\begin{abstract}
An equiangular tight frame (ETF) is a type of optimal packing of lines in a real or complex Hilbert space.
In the complex case, the existence of an ETF of a given size remains an open problem in many cases.
In this paper, we observe that many of the known constructions of ETFs are of one of two types.
We further provide a new method for combining a given ETF of one of these two types with an appropriate group divisible design (GDD) in order to produce a larger ETF of the same type.
By applying this method to known families of ETFs and GDDs, we obtain several new infinite families of ETFs.
The real instances of these ETFs correspond to several new infinite families of strongly regular graphs.
Our approach was inspired by a seminal paper of Davis and Jedwab which both unified and generalized McFarland and Spence difference sets.
We provide combinatorial analogs of their algebraic results,
unifying Steiner ETFs with hyperoval ETFs and Tremain ETFs.
\end{abstract}

\begin{keyword}
equiangular tight frames \sep group divisible designs \MSC[2010] 42C15
\end{keyword}
\end{frontmatter}

\section{Introduction}

Let $N\geq D$ be positive integers, let $\bbF$ be either $\bbR$ or $\bbC$, and let $\ip{\bfx_1}{\bfx_2}=\bfx_1^*\bfx_2^{}$ be the dot product on $\bbF^D$.
The \textit{Welch bound}~\cite{Welch74} states that any $N$ nonzero vectors $\set{\bfphi_n}_{n=1}^{N}$ in $\bbF^D$ satisfy
\begin{equation}
\label{eq.Welch bound}
\max_{n\neq n'}
\tfrac{\abs{\ip{\bfphi_n}{\bfphi_{n'}}}}{\norm{\bfphi_n}\norm{\bfphi_{n'}}}
\geq\bigbracket{\tfrac{N-D}{D(N-1)}}^{\frac12}.
\end{equation}
It is well-known~\cite{StrohmerH03} that nonzero equal-norm vectors $\set{\bfphi_n}_{n=1}^{N}$ in $\bbF^D$ achieve equality in \eqref{eq.Welch bound} if and only if they form an \textit{equiangular tight frame} for $\bbF^D$,
denoted an $\ETF(D,N)$,
namely if there exists $A>0$ such that  $A\norm{\bfx}^2=\sum_{n=1}^{N}\abs{\ip{\bfphi_n}{\bfx}}^2$
for all $\bfx\in\bbF^D$ (tightness),
and the value of $\abs{\ip{\bfphi_n}{\bfphi_{n'}}}$ is constant over all $n\neq n'$ (equiangularity).
In particular, an ETF is a type of optimal packing in projective space,
corresponding to a collection of lines whose minimum pairwise angle is as large as possible.
ETFs arise in several applications,
including waveform design for communications~\cite{StrohmerH03},
compressed sensing~\cite{BajwaCM12,BandeiraFMW13},
quantum information theory~\cite{Zauner99,RenesBSC04}
and algebraic coding theory~\cite{JasperMF14}.

In the general (possibly-complex) setting,
the existence of an $\ETF(D,N)$ remains an open problem for many choices of $(D,N)$.
See~\cite{FickusM16} for a recent survey.
Beyond orthonormal bases and regular simplices,
all known infinite families of ETFs arise from combinatorial designs.
Real ETFs in particular are equivalent to a class of \textit{strongly regular graphs} (SRGs)~\cite{vanLintS66,Seidel76,HolmesP04,Waldron09},
and such graphs have been actively studied for decades~\cite{Brouwer07,Brouwer17,CorneilM91}.
This equivalence has been partially generalized to the complex setting in various ways,
including approaches that exploit properties of roots of unity~\cite{BodmannPT09,BodmannE10},
abelian distance-regular covers of complete graphs (DRACKNs)~\cite{CoutinkhoGSZ16},
and association schemes~\cite{IversonJM16}.
Conference matrices, Hadamard matrices, Paley tournaments and quadratic residues are related, and lead to infinite families of ETFs whose \textit{redundancy} $\frac ND$ is either nearly or exactly two~\cite{StrohmerH03,HolmesP04,Renes07,Strohmer08}.
\textit{Harmonic ETFs} and \textit{Steiner ETFs} offer more flexibility in choosing $D$ and $N$.
Harmonic ETFs are equivalent to \textit{difference sets} in finite abelian groups~\cite{Turyn65,StrohmerH03,XiaZG05,DingF07},
while Steiner ETFs arise from~\textit{balanced incomplete block designs} (BIBDs)~\cite{GoethalsS70,FickusMT12}.
Recent generalizations of Steiner ETFs have led to new infinite families of ETFs arising from projective planes that contain hyperovals~\cite{FickusMJ16} as well as from Steiner triple systems~\cite{FickusJMP18}, dubbed \textit{hyperoval ETFs} and \textit{Tremain ETFs}, respectively.
Another new family arises by generalizing the SRG construction of~\cite{Godsil92} to the complex setting, using generalized quadrangles to produce abelian DRACKNs~\cite{FickusJMPW19}.

Far less is known in terms of necessary conditions on the existence of complex $\ETF(D,N)$.
The \textit{Gerzon bound} implies that $N\leq\min\set{D^2,(N-D)^2}$ whenever a complex $\ETF(D,N)$ with $N>D>1$ exists~\cite{LemmensS73,HolmesP04,Tropp05}.
Beyond this, the only known nonexistence result in the complex case is that an $\ETF(3,8)$ does not exist~\cite{Szollosi14}, a result proven using computational techniques in algebraic geometry.
In quantum information theory, $\ETF(D,D^2)$ are known as \textit{symmetric informationally-complete positive operator-valued measures} (SIC-POVMs).
It is famously conjectured that such Gerzon-bound-equality ETFs exist for any $D$~\cite{Zauner99,Renes07,FuchsHS17}.

In this paper, we give a new method for constructing ETFs that yields several new infinite families of them.
Our main result is Theorem~\ref{thm.new ETF},
which shows how to combine a given initial ETF with a \textit{group divisible design} (GDD) in order to produce another ETF.
In that result, we require the initial ETF to be of one of the following types:
\begin{definition}
\label{def.ETF types}
Given integers $D$ and $N$ with $1<D<N$, we say $(D,N)$ is \textit{type $(K,L,S)$} if
\begin{equation}
\label{eq.ETF param in terms of type param}
D
=\tfrac{S}{K}[S(K-1)+L]
=S^2-\tfrac{S(S-L)}{K},
\quad
N
=(S+L)[S(K-1)+L],
\end{equation}
where $K$ and $S$ are integers and $L$ is either $1$ or $-1$.
For a given $K$, we say $(D,N)$ is \textit{$K$-positive} or \textit{$K$-negative} when it is type $(K,1,S)$ or type $(K,-1,S)$ for some $S$, respectively.
We simply say $(D,N)$ is \textit{positive} or \textit{negative} when it is $K$-positive or $K$-negative for some $K$, respectively.
When we say that an ETF is one of these types, we mean its $(D,N)$ parameters are of that type.
\end{definition}

It turns out that every known $\ETF(D,N)$ with $N>2D>2$ is either a harmonic ETF, a SIC-POVM, or is positive or negative.
In particular, every Steiner ETF is positive, while every hyperoval ETF and Tremain ETF is negative.
In this sense, the ideas and results of this paper are an attempt to unify and generalize several constructions that have been regarded as disparate.
This is analogous to---and directly inspired by---a seminal paper of Davis and Jedwab~\cite{DavisJ97}, which unifies \textit{McFarland}~\cite{McFarland73} and \textit{Spence}~\cite{Spence77} difference sets under a single framework,
and also generalizes them so as to produce difference sets whose corresponding harmonic ETFs have parameters
\begin{equation}
\label{eq.Davis Jedwab parameters}
D=\tfrac13 2^{2J-1}(2^{2J+1}+1),
\quad
N=\tfrac13 2^{2J+2}(2^{2J}-1),
\end{equation}
for some $J\geq 1$.
It is quickly verified that such ETFs are type $(4,-1,S)$ where $S=\frac13(2^{2J+1}+1)$.
As we shall see, combining our main result (Theorem~\ref{thm.new ETF}) with known ETFs and GDDs recovers the existence of ETFs with these parameters, and also provides several new infinite families, including:

\begin{theorem}
\label{thm.new neg ETS with K=4,5}
An $\ETF(D,N)$ of type $(K,-1,S)$ exists whenever:
\begin{enumerate}
\renewcommand{\labelenumi}{(\alph{enumi})}
\item
$K=4$ and either $S\equiv 3\bmod 8$ or $S\equiv 7\bmod 60$;\smallskip
\item
$K=5$ and either $S\equiv 4\bmod 15$ or $S\equiv 5,309\bmod 380$ or $S\equiv 9\bmod 280$.
\end{enumerate}
\end{theorem}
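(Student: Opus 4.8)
The plan is to realize each congruence class as an application of the main result, Theorem~\ref{thm.new ETF}, to a suitable seed ETF of type $(K,-1,S_0)$ together with an appropriate group divisible design. Before that, we must determine how the type parameters transform under the construction of Theorem~\ref{thm.new ETF}: given a seed $\ETF$ of type $(K,-1,S_0)$ and a GDD with $M$ groups each of size $G$ (so that the GDD has $MG$ points) plus block size and index compatible with the ETF's $(D_0,N_0)$ parameters, the output is again type $(K,-1,S)$ for some $S$ that is an affine function of $S_0$ whose slope is controlled by the GDD's group count $M$ and whose intercept is controlled by $G$. Working this out from \eqref{eq.ETF param in terms of type param} should show that if the seed has $S\equiv S_0 \bmod m_0$ then the output ranges over an arithmetic progression $S\equiv S_1 \bmod m_1$ with $m_1$ a multiple of $m_0$ times the relevant GDD parameter. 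The congruences appearing in the statement---$3\bmod 8$, $7\bmod 60$, $4\bmod 15$, $5,309\bmod 380$, $9\bmod 280$---should each factor as (small seed case) composed with (one or two GDD extensions), so the first real task is to reverse-engineer, for each of the five classes, the factorization into seed-plus-GDD.

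For part~(a), $K=4$: every Tremain ETF is type $(4,-1,S)$, and the Tremain ETFs coming from Steiner triple systems of admissible orders give an infinite supply of seeds; these exist for all $v\equiv 1,3 \bmod 6$, which after translating through \eqref{eq.ETF param in terms of type param} with $K=4$, $L=-1$ pins down a residue of $S$ modulo a small number. Combined with the known GDDs---transversal designs $\TD(k,n)$ give GDDs with $k$ groups of size $n$, and these exist whenever $n$ is a prime power at least $k-1$---we should be able to hit $S\equiv 3 \bmod 8$ directly from the Tremain family and $S\equiv 7 \bmod 60$ by adjoining a $\TD$-based GDD whose group count introduces the factor $60 = 4\cdot 15$ (or $8\cdot\text{something}$, depending on which seed we start from). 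For part~(b), $K=5$: here we need seed ETFs of type $(5,-1,S)$. These should come either from a known small family (I expect the hyperoval ETFs, since every hyperoval ETF is negative, or a sporadic $K=5$ construction from the literature cited in the introduction) or as themselves the output of one GDD-extension applied to a $K=5$ seed; the three moduli $15$, $380 = 4\cdot 5\cdot 19$, and $280 = 8\cdot 5\cdot 7$ strongly suggest transversal designs $\TD(k,n)$ with $n\in\{4,5,7,8,19,\ldots\}$ a prime power, chained appropriately.

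The concrete key steps, in order, are: (1) specialize the parameter bookkeeping of Theorem~\ref{thm.new ETF} to the negative case $L=-1$ and record the exact affine map $S_0\mapsto S$ and how it acts on residues; (2) identify, for each of the five target progressions, an explicit seed ETF of type $(K,-1,S_0)$ from the known families surveyed in the introduction (Steiner, hyperoval, Tremain, or Davis--Jedwab harmonic ETFs, via the remark that the latter are type $(4,-1,\tfrac13(2^{2J+1}+1))$); (3) exhibit the matching GDD---in each case a transversal design, whose existence we get from the standard fact that $\TD(k,q)$ exists for $q$ a prime power with $q\ge k-1$---and verify its block-size and index match the seed ETF's $(D_0,N_0)$ as required by Theorem~\ref{thm.new ETF}; (4) chain a second GDD extension where the modulus is composite enough to demand it (the $\bmod 380$ and $\bmod 280$ classes, and possibly the $\bmod 60$ class); (5) check that the resulting $S$ is a positive integer in the claimed class and that all intermediate divisibility constraints from \eqref{eq.ETF param in terms of type param} are met, so that the output genuinely has integer $(D,N)$.

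The main obstacle I anticipate is step~(3)–(4): getting the GDD parameters to line up \emph{exactly} with what Theorem~\ref{thm.new ETF} demands. The theorem presumably requires the GDD's block size to equal a specific function of the seed ETF's parameters (something like $D_0$ or $N_0/\text{something}$) and its index to be $1$ or another pinned value, so we are not free to pick any convenient transversal design---the seed and the GDD must be co-designed. Finding the right seed for the $K=5$ cases, where the menu of known ETFs is thinner, and then threading a prime-power modulus through two composable GDD extensions so that the final residue is precisely $5$ or $309 \bmod 380$ (and not some other class), will require careful case analysis; the appearance of the two residues $5,309 \bmod 380$ in particular hints that two different seed-or-GDD choices are needed to cover one stated class, which is the kind of bookkeeping that is easy to get off by a multiplicative constant. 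Once the parameter matching is nailed, the existence of the requisite transversal designs is classical and the rest is arithmetic verification.
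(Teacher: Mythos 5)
Your high-level strategy (apply Theorem~\ref{thm.new ETF} to a small seed ETF of type $(K,-1,S_0)$ together with a suitable GDD) is the paper's strategy, but the concrete plan has gaps that would prevent it from producing the stated congruence classes. First, your proposed seeds for $K=4$ are wrong: Tremain ETFs are type $(3,-1,S)$, not $(4,-1,S)$. The paper seeds part~(a) with the $\ETF(6,16)$, which is type $(4,-1,3)$, and the $\ETF(35,120)$, type $(4,-1,7)$; part~(b) is seeded by the $\ETF(12,45)$, $\ETF(19,76)$ and $\ETF(63,280)$, of types $(5,-1,4)$, $(5,-1,5)$, $(5,-1,9)$ (Steiner from the affine plane of order $3$, hyperoval, and Steiner from a Denniston design, respectively). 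Second, and more fundamentally, transversal designs cannot do the job you assign them. In Theorem~\ref{thm.new ETF} the group size $M=S(K-1)-1$ is pinned by the seed and the free parameter is the \emph{number of groups} $U$; a $\TD(K,n)$ is a $K$-GDD of type $n^K$, so its number of groups is forced to equal $K$, and with the seed fixed it yields at most one new value of $S$ rather than a full arithmetic progression. To sweep out $S\equiv 3\bmod 8$, $7\bmod 60$, etc., one needs $K$-GDDs of type $M^U$ for \emph{all} admissible $U$, and the essential input---which your plan never invokes---is that for $K=4,5$ and the specific values $M=8,20,15,19,35$ the necessary conditions~\eqref{eq.new ETF 1} are known to be sufficient for the existence of such GDDs (the Handbook results cited as~\cite{Ge07}). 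The congruence classes in the statement then arise by intersecting those existence conditions with the extra integrality requirement~\eqref{eq.new ETF 2}, which translates into $U\equiv 1\bmod 3$, $U\equiv 1\bmod 9$, $U\equiv1\bmod 4$, $U\equiv 1,65\bmod 80$, $U\equiv 1\bmod 32$ in the five cases, and then $S=\tfrac{MU+1}{K-1}$.

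Two further points. Your suggestion to ``chain'' two GDD extensions to reach the larger moduli is both unnecessary and unproductive: the paper's Remark~\ref{rem.recursive} shows that a recursive application of Theorem~\ref{thm.new ETF} is subsumed by a single application with a GDD obtained by filling holes, so nothing new is gained that way. And your reading of the pair $5,309\bmod 380$ as evidence of two different seed-or-GDD choices is off the mark: both residues come from the single seed of type $(5,-1,5)$, because the GDD existence condition and~\eqref{eq.new ETF 2} jointly force $U\equiv 1,65\bmod 80$, and $S=\tfrac{19U+1}{4}$ then lands in exactly those two classes modulo $380$. So the missing ideas are (i) the correct identification of the five seed ETFs from Theorem~\ref{thm.known negative ETFs}, and (ii) replacing transversal designs by the complete existence results for uniform $4$- and $5$-GDDs of the relevant group types, without which the claimed infinite arithmetic progressions cannot be reached.
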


This result extends the $S$ for which an ETF of type $(4,-1,S)$ is known to exist from a geometric progression to a finite union of arithmetic progressions,
with the smallest new ETF having $S=19$, namely $(D,N)=(266,1008)$, cf.\ \cite{FickusM16}.
Meanwhile, the ETFs given by Theorem~\ref{thm.new neg ETS with K=4,5} in the $K=5$ case seem to be completely new except when $S=4,5,9$,
with $(D,N)=(285,1350)$ being the smallest new example.
Using similar techniques, we were also able to find new, explicit infinite families of $K$-negative ETFs for $K=6,7,10,12$.
The description of these families is technical, and so is given in Theorem~\ref{thm.new neg ETS with K>5} as opposed to here.
More generally, using asymptotic existence results for GDDs,
we show that an infinite number of $K$-negative ETFs also exist whenever $K=Q+2$ where $Q$ is a prime power, $K=Q+1$ where $Q$ is an even prime power,
or $K=8,20,30,42,56,342$.

In certain cases, the new ETFs constructed by these methods can be chosen to be real:

\begin{theorem}\
\label{thm.new real ETFs}
\begin{enumerate}
\renewcommand{\labelenumi}{(\alph{enumi})}
\item
There are an infinite number of real Hadamard matrices of size $H\equiv 1\bmod 35$,
and a real ETF of type $(5,-1,8H+1)$ exists for all such $H$.
\item
There are an infinite number of real Hadamard matrices of size $H\equiv 1,8\bmod 21$,
and a real ETF of type $(6,-1,2H+1)$ exists for all sufficiently large such $H$.
\item
There are an infinite number of real Hadamard matrices of size $H\equiv 1,12\bmod 55$,
and a real ETF of type $(10,-1,4H+1)$ exists for all sufficiently large such $H$.
\item
There are an infinite number of real Hadamard matrices of size $H\equiv 1,277\bmod 345$,
and a real ETF of type $(15,-1,4H+1)$ exists for all sufficiently large such $H$.
\end{enumerate}
\end{theorem}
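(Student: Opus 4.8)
The plan is to derive each of (a)--(d) from a real-valued specialization of the construction behind Theorem~\ref{thm.new ETF}: combine an appropriate group divisible design with a single small \emph{real} initial ETF of the same negative type, and then invoke a real Hadamard matrix to turn the resulting (a priori complex) ETF into a real one. The Hadamard matrix plays the role it plays for Steiner ETFs: the construction of Theorem~\ref{thm.new ETF} itself needs no Hadamard input, but forcing its output to be real requires a coherent $\pm1$ labelling of the blocks of the GDD, and a Hadamard matrix of the correct order is exactly what provides this. Inverting the stated relations $S = 8H+1,\ 2H+1,\ 4H+1,\ 4H+1$, that order works out to be $H$ itself. So the first thing I would do is check, from the explicit shape of the GDD in each case, that (i) the certifying Hadamard matrix indeed has order $H$, and (ii) the ETF parameters $D = S^2 - \tfrac{S(S+1)}{K}$ and $N = (S-1)[S(K-1)-1]$ are integral precisely when $H$ lies in the advertised residue class modulo $m\in\{35,21,55,345\}$. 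Each statement then reduces to two claims: that this class contains infinitely many Hadamard orders, and that a real $\ETF(D,N)$ of the stated type exists for every (sufficiently large) such $H$.

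The Hadamard-existence claim is the easy half. Since $35$, $21$, $55$ and $345$ are all odd, $2$ is a unit modulo each of them, so for any $t$ divisible by $\operatorname{ord}_m(2)$ the Sylvester Hadamard matrix of order $2^t$ exists unconditionally and satisfies $2^t\equiv 1\bmod m$. Thus there are infinitely many real Hadamard matrices of size $H\equiv 1\bmod m$, hence infinitely many in the stated union of classes. (Equivalently, one may take Kronecker powers of one fixed Hadamard matrix whose order already lies in the class, e.g.\ order $36\equiv 1\bmod 35$ in case (a), since $36^j\equiv 1\bmod 35$ for all $j$, or order $8$ in case (b), since $8^j\in\{1,8\}\bmod 21$.)

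The ETF-existence claim carries the content, and it mirrors the proofs of the complex results already recorded. For (a), with $K=5$: one checks that $8H+1\equiv 9\bmod 280$ exactly when $H\equiv 1\bmod 35$, so these are precisely the complex ETFs produced in the progression $S\equiv 9\bmod 280$ of Theorem~\ref{thm.new neg ETS with K=4,5}(b); the GDD used there is small and fixed, which is why no largeness hypothesis appears, and the work is to rerun that construction while tracking a real structure throughout. For (b) and (c) ($K=6$ and $K=10$) one uses the explicit families behind Theorem~\ref{thm.new neg ETS with K>5}, and for (d) ($K=15=13+2$, with $13$ a prime power) the asymptotic existence of $K$-negative ETFs for $K=Q+2$; in these three cases the GDDs are transversal-design-type --- equivalently, $\{K\}$-GDDs built from mutually orthogonal Latin squares --- which exist only once their order is sufficiently large, and this is the origin of the ``sufficiently large $H$'' hypothesis. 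In every case one feeds the GDD, the order-$H$ Hadamard matrix, and the fixed small real negative initial ETF (a real instance of a Tremain or hyperoval ETF from the cited literature) into the real specialization of Theorem~\ref{thm.new ETF} to obtain the desired real $\ETF(D,N)$.

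The hard part will be the realness bookkeeping in that last step. I would revisit the proof of Theorem~\ref{thm.new ETF} (and, for (a), of Theorem~\ref{thm.new neg ETS with K=4,5}) to determine which entries of the intermediate Gram matrix can be driven into $\{0,\pm1\}$, to verify that the sign pattern this forces across the blocks of the GDD is globally consistent, and to pin down the exact order of the Hadamard matrix certifying that consistency --- confirming it is $H$ and not merely a multiple of $H$. Once that is settled, identifying the residue classes $m$ from the integrality of $(D,N)$ together with the admissibility of the GDD, and invoking the asymptotic existence of the relevant GDDs (i.e.\ of Latin squares), are routine.
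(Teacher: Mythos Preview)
Your plan has the right skeleton---apply Theorem~\ref{thm.new ETF} to a known small real negative ETF---but you have misidentified where the Hadamard matrix enters, and this propagates into several other errors.

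The construction of Theorem~\ref{thm.new ETF} \emph{does} require Hadamard input: the vectors~\eqref{eq.new ETF 3} are built from columns $\{\bfe_i\}$ of a Hadamard matrix of size $S+L$ and from $\{\bff_j\}$ where $\{1\oplus\bff_j\}$ are the columns of a Hadamard matrix of size $W+1$. The realness argument is therefore not ``the hard part'' at all; it is one line: if the initial ETF and both Hadamard matrices are real, then every entry of every $\bfpsi_{u,m,i,j}$ is real. There is no sign-pattern bookkeeping, no consistency check across blocks, and nothing to verify about Gram-matrix entries beyond what Theorem~\ref{thm.new ETF} already proved. The order $H$ is simply $W+1=\tfrac{R}{S+L}+1$, read off directly from the construction, not something to be ``pinned down.''

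This misreading cascades. The reason (a) needs no largeness hypothesis while (b)--(d) do is not that the GDD in (a) is ``small and fixed'': in every case the GDD is a $K$-GDD of type $M^U$ with $U$ growing with $H$. The difference is that for $K=5$ the necessary conditions~\eqref{eq.GDD necessary conditions} are known to be sufficient for all admissible $U$, whereas for $K=6,10,15$ they are only asymptotically sufficient. The GDDs in (b)--(d) are not transversal designs. The initial real ETFs are not Tremain or hyperoval ETFs: the paper uses the real $\ETF(63,280)$, $\ETF(7,28)$, $\ETF(22,176)$, and $\ETF(23,276)$, of types $(5,-1,9)$, $(6,-1,3)$, $(10,-1,5)$, and $(15,-1,5)$ respectively, each of which also supplies a real Hadamard matrix of size $S+L\in\{8,2,4,4\}$. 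Your Sylvester-matrix argument for infinitely many Hadamard orders in the stated residue classes is fine (and arguably cleaner than the paper's Paley--Dirichlet argument), but the rest of the proposal needs to be reworked around the actual mechanics of~\eqref{eq.new ETF 3}.
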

These correspond to four new infinite families of SRGs, with the smallest new example being a real $\ETF(66759,332640)$, which is obtained by letting $H=36$ in (a).

In the next section, we introduce known concepts from frame theory and combinatorial design that we need later on.
In Section~3, we provide an alternative characterization of when an ETF is positive or negative (Theorem~\ref{thm.parameter types}),
which we then use to help prove our main result (Theorem~\ref{thm.new ETF}).
In the fourth section, we discuss how many known ETFs are either positive or negative,
and then apply Theorem~\ref{thm.new ETF} to them along with known GDDs to obtain the new infinite families of negative ETFs described in Theorems~\ref{thm.new neg ETS with K=4,5} and~\ref{thm.new neg ETS with K>5}.
We conclude in Section~5, using these facts as the basis for new conjectures on the existence of real and complex ETFs.

\section{Preliminaries}

\subsection{Equiangular tight frames}

For any positive integers $N$ and $D$,
and any sequence $\set{\bfphi_n}_{n=1}^{N}$ of vectors in $\bbF^D$,
the corresponding \textit{synthesis operator} is $\bfPhi:\bbF^N\rightarrow\bbF^D$,
$\bfPhi\bfy:=\sum_{n=1}^{N}\bfy(n)\bfphi_n$,
namely the $D\times N$ matrix whose $n$th column is $\bfphi_n$.
Its adjoint (conjugate transpose) is the \textit{analysis operator} $\bfPhi^*:\bbF^D\rightarrow\bbF^N$, which has $(\bfPhi^*\bfx)(n)=\ip{\bfphi_n}{\bfx}$ for all $n=1,\dotsc,N$.
That is, $\bfPhi^*$ is the $D\times N$ matrix whose $n$th row is $\bfphi_n^*$.
Composing these two operators gives the $N\times N$ \textit{Gram matrix} $\bfPhi^*\bfPhi$ whose $(n,n')$th entry is $(\bfPhi^*\bfPhi)(n,n')=\ip{\bfphi_n}{\bfphi_{n'}}$,
as well as the $D\times D$ \textit{frame operator} $\bfPhi\bfPhi^*=\sum_{n=1}^{N}\bfphi_n^{}\bfphi_n^*$.

We say $\set{\bfphi_n}_{n=1}^{N}$ is a \textit{tight frame} for $\bbF^D$ if there exists $A>0$ such that $\bfPhi\bfPhi^*=A\bfI$,
namely if the rows of $\bfPhi$ are orthogonal and have an equal nontrivial norm.
We say $\set{\bfphi_n}_{n=1}^{N}$ is \textit{equal norm} if there exists some $C$ such that $\norm{\bfphi_n}^2=C$ for all $n$.
The parameters of an equal norm tight frame are related according to
$DA=\Tr(A\bfI)=\Tr(\bfPhi\bfPhi^*)=\Tr(\bfPhi^*\bfPhi)=\sum_{n=1}^{N}\norm{\bfphi_n}^2=NC$.
We say $\set{\bfphi_n}_{n=1}^{N}$ is \textit{equiangular} if it is equal norm and the value of $\abs{\ip{\bfphi_n}{\bfphi_{n'}}}$ is constant over all $n\neq n'$.

For any equal norm vectors $\set{\bfphi_n}_{n=1}^{N}$ in $\bbF^D$,
a direct calculation reveals
\begin{equation*}
0
\leq\Tr[(\tfrac1C\bfPhi\bfPhi^*-\tfrac{N}{D}\bfI)^2]
=\sum_{n=1}^{N}\sum_{\substack{n'=1\\n'\neq n}}^{N}
\tfrac{\abs{\ip{\bfphi_n}{\bfphi_{n'}}}^2}{C^2}
-\tfrac{N(N-D)}{D}
\leq N(N-1)\max_{n\neq n'}
\tfrac{\abs{\ip{\bfphi_n}{\bfphi_{n'}}}^2}{C^2}
-\tfrac{N(N-D)}{D}.
\end{equation*}
Rearranging this inequality gives the Welch bound~\eqref{eq.Welch bound}.
Moreover, we see that achieving equality in~\eqref{eq.Welch bound}
is equivalent to having equality above throughout, which happens precisely when $\set{\bfphi_n}_{n=1}^{N}$ is a tight frame for $\bbF^D$ that is also equiangular,
namely when it is an ETF for $\bbF^D$.

If $N>D$ and $\set{\bfphi_n}_{n=1}^{N}$ is a tight frame for $\bbF^D$ then completing the $D$ rows of $\bfPhi$ to an equal-norm orthogonal basis for $\bbF^N$ is equivalent to taking a $(N-D)\times N$ matrix $\bfPsi$ such that
$\bfPsi^*\bfPsi=A\bfI$, $\bfPhi\bfPsi^*=\bfzero$ and $\bfPhi^*\bfPhi+\bfPsi^*\bfPsi=A\bfI$.
The sequence $\set{\bfpsi_n}_{n=1}^{N}$ of columns of any such $\bfPsi$ is called a \textit{Naimark complement} of $\set{\bfphi_n}_{n=1}^{N}$.
Since $\bfPsi\bfPsi^*=A\bfI$ and $\bfPsi^*\bfPsi=A\bfI-\bfPhi^*\bfPhi$,
any Naimark complement of an $\ETF(D,N)$ is an $\ETF(N-D,N)$.
Since any nontrivial scalar multiple of an ETF is another ETF,
we will often assume without loss of generality that a given $\ETF(D,N)$ and its Naimark complements satisfy
\begin{equation}
\label{eq.ETF scaling}
A=N\bigbracket{\tfrac{N-1}{D(N-D)}}^{\frac12},
\quad
\norm{\bfphi_n}^2
=\bigbracket{\tfrac{D(N-1)}{N-D}}^{\frac12},
\quad
\norm{\bfpsi_n}^2
=\bigbracket{\tfrac{(N-D)(N-1)}D}^{\frac12},
\quad
\forall n=1,\dotsc,N,
\end{equation}
which equates to having
$\abs{\ip{\bfphi_n}{\bfphi_{n'}}}=1=\abs{\ip{\bfpsi_n}{\bfpsi_{n'}}}$ for all $n\neq n'$.
For positive and negative ETFs in particular (Definition~\ref{def.ETF types}),
we shall see that all of these quantities happen to be integers.

Any $\ETF(D,N)$ with $N=D+1$ is known as a \textit{regular simplex},
and such ETFs are Naimark complements of ETFs for $\bbF^1$,
namely sequences of scalars that have the same nontrivial modulus.
In particular, a sequence of vectors $\set{\bff_n}_{n=1}^{N}$ in $\bbF^{N-1}$ is a Naimark complement of the all-ones sequence in $\bbF^1$ if and only if
\begin{equation}
\label{eq.regular simplex}
\bfF\bfF^*=N\bfI,
\quad
\sum_{n=1}^{N}\bff_n=\bfF\bfone=\bfzero,
\quad
\bfF^*\bfF=N\bfI-\bfJ,
\end{equation}
where $\bfone$ and $\bfJ$ denote an all-ones column vector and matrix, respectively.
Equivalently, the vectors $\set{1\oplus\bff_n}_{n=1}^{N}$ in $\bbF^N$ are equal norm and orthogonal.
In particular, for any $N>1$, we can always take $\set{1\oplus\bff_n}_{n=1}^{N}$ to be the columns of a possibly-complex Hadamard matrix of size $N$.
In this case, $\bfF$ satisfies~\eqref{eq.regular simplex} and is also~\textit{flat},
meaning every one of its entries has modulus one.
As detailed below,
flat regular simplices can be used to construct several families of ETFs,
including Steiner ETFs as well as those we introduce in Theorem~\ref{thm.new ETF}.

Harmonic ETFs are the best-known class of ETFs~\cite{Strohmer08,XiaZG05,DingF07}.
A harmonic $\ETF(D,N)$ is obtained by restricting the characters of an abelian group $\calG$ of order $N$ to a \textit{difference set} of cardinality $D$,
namely a $D$-element subset $\calD$ of $\calG$ with the property that the cardinality of $\set{(d,d')\in\calD\times\calD: g=d-d'}$ is constant over all $g\in\calG$, $g\neq 0$.
The set complement $\calG\backslash\calD$ of any difference set in $\calG$ is another difference set, and the two corresponding harmonic ETFs are Naimark complements.
In particular, for any abelian group $\calG$ of order $N$,
the harmonic ETF arising from $\calG\backslash\set{0}$ is a flat regular simplex that satisfies \eqref{eq.regular simplex}.

\subsection{Group divisible designs}

For a given integer $K\geq 2$,
a $K$-GDD is a set $\calV$ of $V>K$ vertices,
along with collections $\calG$ and $\calB$ of subsets of $\calV$,
called \textit{groups} and \textit{blocks}, respectively,
with the property that the groups partition $\calV$,
every block has cardinality $K$,
and any two vertices are either contained in a common group or a common block, but not both.
A $K$-GDD is \textit{uniform} if its groups all have the same cardinality $M$,
denoted in \textit{exponential notation} as a ``$K$-GDD of type $M^U$" where $V=UM$.

Letting $B$ be the number of blocks, a $\set{0,1}$-valued $B\times UM$ incidence matrix $\bfX$ of a $K$-GDD of type $M^U$ has the property that each row of $\bfX$ contains exactly $K$ ones.
Moreover, for any $v=1,\dotsc,V=UM$,
the $v$th column of $\bfX$ is orthogonal to $M-1$ other columns of $\bfX$,
and has a dot product of $1$ with each of the remaining $(U-1)M$ columns.
This implies that the \textit{replication number} $R_v$ of blocks that contain the $v$th vertex satisfies
\begin{equation*}
(U-1)M
=\sum_{\substack{v'=1\\v'\neq v}}^V(\bfX^*\bfX)(v,v')
=\sum_{b=1}^{B}\bfX(b,v)\sum_{\substack{v'=1\\v'\neq v}}^V\bfX(b,v')
=\sum_{b=1}^{B}\left\{\begin{array}{cl}K-1,&\bfX(b,v)=1\\0,&\bfX(b,v)=0\end{array}\right\}
=R_v(K-1).
\end{equation*}
As such, this number $R_v=R$ is independent of $v$.
At this point, summing all entries of $\bfX$ gives $BK=VR$ and so
$B$ is also uniquely determined by $K$, $M$ and $U$.
Because of this, the existence of a $K$-GDD of type $M^U$ is equivalent to that of a $\set{0,1}$-valued $B\times UM$ matrix $\bfX$ with
\begin{equation}
\label{eq.GDD incidence matrix}
R=\tfrac{M(U-1)}{K-1},
\quad
B=\tfrac{MUR}{K}=\tfrac{M^2U(U-1)}{K(K-1)},
\quad
\bfX\bfone=K\bfone,
\quad
\bfX^*\bfX
=R\,\bfI+(\bfJ_U-\bfI_U)\otimes\bfJ_M.
\end{equation}
In the special case where $M=1$, a $K$-GDD of type $1^U$ is called a $\BIBD(U,K,1)$.
In the special case where $U=K$, a $K$-GDD of type $M^K$ is called a \textit{transversal design} $\TD(K,M)$,
which is equivalent to a collection of $K-2$ \textit{mutually orthogonal Latin squares} (MOLS) of size $M$.

In order for a $K$-GDD of type $M^U$ to exist,
the expressions for $R$ and $B$ given in~\eqref{eq.GDD incidence matrix} are necessarily integers.
Beyond this, we necessarily have $U\geq K$ since we can partition any given block into its intersections with the groups,
and the cardinality of these intersections is at most one.
Altogether, the parameters of a $K$-GDD of type $M^U$ necessarily satisfy
\begin{equation}
\label{eq.GDD necessary conditions}
U\geq K,
\quad
\tfrac{M(U-1)}{K-1}\in\bbZ,
\quad
\tfrac{M^2U(U-1)}{K(K-1)}\in\bbZ.
\end{equation}
Though these necessary conditions are not sufficient~\cite{Ge07},
they are asymptotically sufficient in two distinct ways:
for any fixed $K\geq 2$ and $M\geq 1$,
there exists $U_0=U_0(K,M)$ such that a $K$-GDD of type $M^U$ exists for all $U\geq U_0$ such that~\eqref{eq.GDD necessary conditions} is satisfied~\cite{Chang76,LamkenW00};
for any fixed $U\geq K\geq 2$,
there exists $M_0=M_0(K,U)$ such that a $K$-GDD of type $M^U$ exists for all $M\geq M_0$ such that~\eqref{eq.GDD necessary conditions} is satisfied~\cite{Mohacsy11}.
In the $M=1$ and $U=K$ cases,
these facts reduce to more classical asymptotic existence results for BIBDs and MOLS, respectively.

Many specific examples of GDDs are formed by combining smaller designs in clever ways.
We in particular will make use of the following result,
which is a special case of Wilson's approach~\cite{Wilson72}:

\begin{lemma}
\label{lem.Wilson}
If a $K$-GDD of type $M^U$ and a $U$-GDD of type $N^V$ exist, then a $K$-GDD of type $(MN)^V$ exists.
\end{lemma}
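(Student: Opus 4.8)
The plan is to build the desired $K$-GDD of type $(MN)^V$ directly by substituting copies of the given $K$-GDD of type $M^U$ into the groups of a $U$-GDD of type $N^V$, in the spirit of Wilson's fundamental construction. First I would fix the $U$-GDD of type $N^V$: its vertex set $\calW$ splits into $V$ groups $W_1,\dotsc,W_V$, each of size $N$, and it has blocks $\calC\subseteq\calW$, each of size $U$, so that any two vertices lie together in exactly one group or exactly one block. The new vertex set will be $\calW\times[M]$, partitioned into the $V$ groups $W_i\times[M]$ of size $NM$; this gives the ``type $(MN)^V$'' group structure. It remains to choose the blocks so that every pair of vertices in distinct new groups is covered exactly once, while pairs within a common new group are covered zero times.

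The key step is the block construction. For each block $C=\{w_1,\dotsc,w_U\}$ of the $U$-GDD, the $U$ vertices $w_1,\dotsc,w_U$ lie in $U$ pairwise-distinct groups of the $U$-GDD (by the GDD axioms, since they pairwise share a block they cannot share a group), so $C$ meets each of those groups in exactly one point. I would now ``inflate'' $C$ to the vertex set $C\times[M]$, which has $U$ natural ``columns'' $\{w_j\}\times[M]$ of size $M$ each — exactly the shape of the group partition of a $K$-GDD of type $M^U$. Place a fresh copy of the given $K$-GDD of type $M^U$ on $C\times[M]$, identifying its $U$ groups with these $U$ columns, and take all of its blocks (each of size $K$) into the new block collection $\calB$. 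Doing this for every $C\in\calC$ produces $\calB$.

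Then I would verify the covering conditions by a short case analysis on a pair of distinct new vertices $(w,s),(w',s')$ with $(w,s)\neq(w',s')$. If $w$ and $w'$ lie in the same group $W_i$ of the $U$-GDD: then no block $C$ of the $U$-GDD contains both, so they never land in a common inflated copy, hence are covered zero times — and indeed $(w,s),(w',s')$ both lie in the new group $W_i\times[M]$, as required. If $w\neq w'$ lie in distinct groups of the $U$-GDD: there is a unique block $C\ni w,w'$; in the copy of the $K$-GDD placed on $C\times[M]$, the points $(w,s)$ and $(w',s')$ lie in distinct groups (the columns over $w$ and over $w'$), so they are covered by exactly one block of that copy, and by no block coming from any other $C'$ (since no other $C'$ contains both $w$ and $w'$) — total coverage exactly one. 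Finally, if $w=w'$ but $s\neq s'$: the pair lies in the common new group $\{w\}\times[M]\subseteq W_i\times[M]$; any inflated copy containing both $(w,s)$ and $(w,s')$ would have them in a common column, i.e.\ a common group of that $K$-GDD, so no block of the copy covers them — coverage zero, as required. Since every block of $\calB$ has size $K$ and $V=|{\{W_i\}}|>\,$ suffices together with $U>K$ to give $VM N>K$, all the GDD axioms are met.

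The only subtle point — the part I would be most careful about — is the bookkeeping that each inflated copy genuinely realizes its groups as the $M$-element columns $\{w_j\}\times[M]$, so that ``same column'' corresponds exactly to ``same group of the small GDD'' and the within-copy covering count is governed by the small GDD's axioms; everything else is routine verification of the three GDD conditions. I could optionally cross-check via the incidence-matrix identities in~\eqref{eq.GDD incidence matrix}: if $\bfX$ is an incidence matrix of the $K$-GDD of type $M^U$ with $\bfX\bfone=K\bfone$, and $\bfY$ of the $U$-GDD of type $N^V$ with $\bfY\bfone=U\bfone$, one can assemble, for each row (block) of $\bfY$, a block of $\bfX$-rows acting on the corresponding $U$ columns of $\calW\times[M]$, and confirm $\bfX$-type count $R_{MN} = \frac{(MN)(V-1)}{K-1}$ and $B_{MN}=\frac{(MN)^2V(V-1)}{K(K-1)}$; but the direct combinatorial argument above is cleaner and self-contained, so that is the route I would write up.
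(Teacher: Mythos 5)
Your proposal is correct and is essentially the same construction as the paper's: the paper phrases it via incidence matrices (in each row of $\bfY$, replace the $U$ ones with the distinct group-blocks $\bfX_1,\dotsc,\bfX_U$ of $\bfX$ and the zeros with zero matrices), which is exactly your block-inflation with the small GDD's groups aligned to the columns over each block. Your case-by-case verification of the pair-coverage conditions simply makes explicit what the paper leaves to the reader.
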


\begin{proof}
Let $\bfX$ and $\bfY$ be incidence matrices of the form~\eqref{eq.GDD incidence matrix} for the given $K$-GDD of type $M^U$ and $U$-GDD of type $N^V$, respectively.
In particular, taking $R$ and $B$ as in~\eqref{eq.GDD incidence matrix},
we can write $\bfX=\left[\begin{array}{ccc}\bfX_1&\cdots&\bfX_U\end{array}\right]$ where each $\bfX_u$ is a $B\times M$ matrix with $\bfX_u^*\bfX_u^{}=R\bfI$,
and $\bfX_u^*\bfX_{u'}^{}=\bfJ$ for any $u\neq u'$.
We now construct the incidence matrix $\bfZ$ of a $K$-GDD of type $(MN)^V$ in the following manner:
in each row of $\bfY$, replace each of the $U$ nonzero entries with a distinct matrix $\bfX_u$,
and replace each of the zero entries with a $B\times M$ matrix of zeros.
\end{proof}

This result generalizes MacNeish's classical method for combining MOLS~\cite{MacNeish22}:
if a $\TD(K,M)$ and a $\TD(K,N)$ exist, then applying Lemma~\ref{lem.Wilson} to them produces a $\TD(K,MN)$.
We will also use one GDD to ``fill the holes" of another:

\begin{lemma}
\label{lem.filling holes}
If $K$-GDDs of type $M^U$ and $(MU)^V$ exist, then a $K$-GDD of type $M^{UV}$ exists.
\end{lemma}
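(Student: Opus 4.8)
The plan is to \emph{fill the holes} of the given $K$-GDD of type $(MU)^V$ with copies of the given $K$-GDD of type $M^U$, following the incidence-matrix style of the proof of Lemma~\ref{lem.Wilson}. First I would fix incidence matrices of the form~\eqref{eq.GDD incidence matrix}: let $\bfX$ be one for the $K$-GDD of type $M^U$, and let $\bfY$ be one for the $K$-GDD of type $(MU)^V$. I would then index the $MUV$ intended vertices by triples $(\ell,u,m)$ with $\ell=1,\dotsc,V$, $u=1,\dotsc,U$, $m=1,\dotsc,M$, chosen so that the columns of $\bfY$ fall into $V$ consecutive blocks of $MU$ columns, each being one of its groups (a ``hole''), and so that the $u$th sub-block of $M$ columns inside the $\ell$th hole is identified, via some fixed bijection, with the $u$th group of a copy of the type-$M^U$ design. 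The type-$M^{UV}$ design being built then has these $UV$ sub-blocks of size $M$ as its groups, and for its blocks takes the blocks of $\bfY$ together with, for each $\ell$, the blocks of a copy of the type-$M^U$ design supported on the $\ell$th hole $\set{(\ell,u,m):u,m}$. In matrix form its incidence matrix is
\begin{equation*}
\bfZ=\begin{bmatrix}\bfY\\ \bfI_V\otimes\bfX\end{bmatrix}.
\end{equation*}

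Next I would verify that $\bfZ$ meets the conditions in~\eqref{eq.GDD incidence matrix} for a $K$-GDD of type $M^{UV}$. It is plainly $\set{0,1}$-valued, and every one of its rows has exactly $K$ ones, so $\bfZ\bfone=K\bfone$. The crux is the Gram matrix
\begin{equation*}
\bfZ^*\bfZ=\bfY^*\bfY+\bfI_V\otimes(\bfX^*\bfX).
\end{equation*}
Substituting the known forms of $\bfY^*\bfY$ and $\bfX^*\bfX$ from~\eqref{eq.GDD incidence matrix}, writing $\bfI_{MUV}=\bfI_V\otimes\bfI_U\otimes\bfI_M$, $\bfJ_{MU}=\bfJ_U\otimes\bfJ_M$ and $\bfJ_{UV}=\bfJ_V\otimes\bfJ_U$, and collecting terms, the off-diagonal part simplifies via
\begin{equation*}
(\bfJ_V-\bfI_V)\otimes\bfJ_U+\bfI_V\otimes(\bfJ_U-\bfI_U)=\bfJ_V\otimes\bfJ_U-\bfI_V\otimes\bfI_U=\bfJ_{UV}-\bfI_{UV},
\end{equation*}
so that $\bfZ^*\bfZ=R\,\bfI+(\bfJ_{UV}-\bfI_{UV})\otimes\bfJ_M$ with
\begin{equation*}
R=\tfrac{MU(V-1)}{K-1}+\tfrac{M(U-1)}{K-1}=\tfrac{M(UV-1)}{K-1},
\end{equation*}
exactly the replication number of a $K$-GDD of type $M^{UV}$. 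The number of blocks is then automatically forced to be $\frac{M^2UV(UV-1)}{K(K-1)}$ (summing the entries of $\bfZ$ two ways), so the equivalence recorded just before Lemma~\ref{lem.Wilson} lets me conclude that a $K$-GDD of type $M^{UV}$ exists.

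I do not anticipate a real obstacle here: the entire content is the bookkeeping of choosing the vertex indexing so that the ``coarse'' group structure of the type-$(MU)^V$ design and the ``fine'' group structure contributed by the $V$ planted copies of the type-$M^U$ design appear simultaneously as nested Kronecker factors; after that, each verification is a one-line computation. If a purely design-theoretic phrasing is preferred, I would instead argue directly that any two vertices in a common hole lie in a common group or block of the copy planted there and in no block of $\bfY$, while any two vertices in distinct holes lie in a unique block of $\bfY$ and in no planted block, so in every case exactly one of ``common group'' or ``common block'' holds, every block has size $K$, and the groups partition the vertex set into $UV$ classes of size $M$.
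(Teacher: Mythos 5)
Your construction is exactly the paper's: the paper stacks $\bfI_V\otimes\bfX$ and $\bfY$ (in the opposite row order, which is immaterial) and declares the verification straightforward, while you supply the row-sum and Gram-matrix checks explicitly, and these are correct. Same approach, correct proof.
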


\begin{proof}
Letting $\bfX$ and $\bfY$ be incidence matrices of the form~\eqref{eq.GDD incidence matrix} for the given $K$-GDDs of type $M^U$ and $(MU)^V$, respectively,
it is straightforward to verify that
\begin{equation*}
\bfZ
=\left[\begin{array}{c}\bfI_V\otimes\bfX\\\bfY\end{array}\right]
\end{equation*}
is the incidence matrix of a $K$-GDD of type $M^{UV}$.
\end{proof}

\subsection{Previously known constructions of ETFs involving BIBDs and MOLS}

In the next section, we introduce a method for constructing ETFs that uses GDDs.
This method makes use of a concept from~\cite{FickusJMP18}, which we now generalize from BIBDs to GDDs:

\begin{definition}
\label{def.embeddings}
Take a $K$-GDD of type $M^U$ where $M\geq1$ and $U\geq K\geq 2$,
and define $R$, $B$ and an incidence matrix $\bfX$ according to~\eqref{eq.GDD incidence matrix}.
Without loss of generality, write the columns of $\bfX$ as \smash{$\set{\bfx_{u,m}}_{u=1,}^{U}\,_{m=1}^{M}$} where, for each $u$,
the vectors $\set{\bfx_{u,m}}_{m=1}^{M}$ have disjoint support.
Then, for any $u$ and $m$, a corresponding \textit{embedding operator} $\bfE_{u,m}$ is any $\set{0,1}$-valued $B\times R$ matrix whose columns are standard basis elements that sum to $\bfx_{u,m}$.
\end{definition}

In the special case where $M=1$,
this concept leads to an elegant formulation of Steiner ETFs~\cite{FickusJMP18}:
letting \smash{$\set{\bfE_v}_{v=1}^{V}=\set{\bfE_{u,1}}_{u=1}^{U}$} be the embedding operators of a $\BIBD(V,K,1)$,
and letting $\set{1\oplus\bff_i}_{i=0}^{R}$ be the columns of a possibly-complex Hadamard matrix of size $R+1=\frac{V-1}{K-1}+1$,
the $V(R+1)$ vectors \smash{$\set{\bfE_v\bff_i}_{v=1,}^{V}\,_{i=0}^{R}$} form an ETF for $\bbF^B$.
In~\cite{FickusJMP18}, this fact is proven using several properties of embedding operators.
We now show those properties generalize to the GDD setting;
later on, we use these facts to prove our main result:

\begin{lemma}
\label{lem.embed}
If $\set{\bfE_{u,m}}_{u=1,}^{U},\,_{m=1}^{M}$ are the embedding operators arising from a $K$-GDD of type $M^U$,
\begin{equation*}
\bfE_{u,m}^*\bfE_{u',m'}^{}
=\left\{\begin{array}{cl}
\bfI,&\ u=u', m=m',\\
\bfzero,&\ u=u', m\neq m',\\
\bfdelta_{r}^{}\bfdelta_{\smash{r'}}^*,&\ u\neq u'.
\end{array}\right.
\end{equation*}
Here, for any $u\neq u'$ and $m,m'$,
$\bfdelta_r$ and $\bfdelta_{\smash{r'}}$ are standard basis elements in $\bbF^R$ whose indices $r,r'$ depend on $u,u',m,m'$.
\end{lemma}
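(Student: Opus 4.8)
The plan is to unpack the definitions and compute $\bfE_{u,m}^*\bfE_{u',m'}^{}$ entrywise, exploiting the structure of the GDD incidence matrix recorded in~\eqref{eq.GDD incidence matrix}. Recall that each $\bfE_{u,m}$ is a $B\times R$ matrix whose columns are distinct standard basis vectors of $\bbF^B$ summing to $\bfx_{u,m}$; equivalently, the columns of $\bfE_{u,m}$ select, in some fixed order, the $R$ rows $b$ (blocks) for which $\bfX(b,v)=1$, where $v=v(u,m)$ is the vertex labelled by the pair $(u,m)$. The $(r,r')$ entry of $\bfE_{u,m}^*\bfE_{u',m'}^{}$ is then $1$ precisely when the $r$th block containing vertex $v(u,m)$ coincides with the $r'$th block containing vertex $v(u',m')$, and $0$ otherwise; summing these entries over all $r,r'$ recovers exactly the dot product $(\bfX^*\bfX)(v(u,m),v(u',m'))$.

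First I would treat the case $u=u'$. If also $m=m'$, then the two embedding operators are identical, and since a block appears at most once in the column list of $\bfE_{u,m}$, the matrix $\bfE_{u,m}^*\bfE_{u,m}^{}$ is diagonal with all diagonal entries equal to $1$, i.e. $\bfI$. If $m\neq m'$, then vertices $v(u,m)$ and $v(u,m')$ lie in the same group, hence (by the defining property of a GDD) in no common block, so no column of $\bfE_{u,m}$ equals a column of $\bfE_{u,m'}$; therefore $\bfE_{u,m}^*\bfE_{u,m'}^{}=\bfzero$. This matches the entries of $\bfX^*\bfX$ in~\eqref{eq.GDD incidence matrix}, where same-group off-diagonal entries vanish.

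Next I would handle $u\neq u'$. Now $v(u,m)$ and $v(u',m')$ lie in different groups, so by~\eqref{eq.GDD incidence matrix} their dot product in $\bfX$ is exactly $1$: they share exactly one common block, say block $b_0$. Thus there is exactly one index $r$ with $\bfx_{u,m}$ selecting $b_0$ as its $r$th block, and exactly one index $r'$ with $\bfx_{u',m'}$ selecting $b_0$ as its $r'$th block; for every other pair of indices the corresponding blocks differ. Hence $\bfE_{u,m}^*\bfE_{u',m'}^{}$ has a single nonzero entry, a $1$ in position $(r,r')$, which is precisely $\bfdelta_r^{}\bfdelta_{r'}^*$. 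The indices $r,r'$ depend on $u,u',m,m'$ through which block is the common one and where it sits in each vertex's list of incident blocks, exactly as the statement asserts.

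I do not expect a serious obstacle here: the argument is a direct bookkeeping translation of the GDD axiom ``any two vertices lie in a common group or a common block, but not both'' into the language of the $0/1$ selection matrices $\bfE_{u,m}$. The one point requiring a little care is to be explicit that the columns of each $\bfE_{u,m}$ are \emph{distinct} standard basis vectors (so no block is repeated within a single embedding operator), which is what forces the diagonal $\bfI$ in the case $u=u'$, $m=m'$ and also guarantees that in the $u\neq u'$ case the single shared block contributes to only one entry rather than a block of entries. Beyond that, matching the three cases against the three cases of the formula for $\bfX^*\bfX$ in~\eqref{eq.GDD incidence matrix} is routine.
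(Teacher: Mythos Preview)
Your proof is correct and follows essentially the same approach as the paper: both arguments reduce to the observation that the entries of $\bfE_{u,m}^*\bfE_{u',m'}^{}$ are nonnegative integers whose total equals $\ip{\bfx_{u,m}}{\bfx_{u',m'}}$, and then read off the three cases from~\eqref{eq.GDD incidence matrix}. The paper packages this as the single identity $\bfone^*\bfE_{u,m}^*\bfE_{u',m'}^{}\bfone=\ip{\bfx_{u,m}}{\bfx_{u',m'}}$ rather than reasoning block-by-block as you do, but the substance is identical.
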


\begin{proof}
Each $\bfE_{u,m}$ is a matrix whose columns are standard basis elements that sum to~$\bfx_{u,m}$, and so is an isometry, that is, $\bfE_{u,m}^*\bfE_{u,m}^{}=\bfI$.
Moreover, for any $u$, $u'$, $m$, $m'$, $\bfE_{u,m}^*\bfE_{u',m'}^{}$ is a matrix whose entries are nonnegative integers that sum to:
\begin{equation*}
\sum_{r=1}^{R}\sum_{r'=1}^{R}(\bfE_{u,m}^*\bfE_{u',m'}^{})(r,r')
=\bfone^*\bfE_{u,m}^*\bfE_{u',m'}^{}\bfone^{}
=\ip{\bfx_{u,m}}{\bfx_{u',m'}}
=\left\{\begin{array}{cl}
R,&\ u=u', m=m',\\
0,&\ u=u', m\neq m',\\
1,&\ u'\neq u.
\end{array}\right.
\end{equation*}
When $u=u'$ and $m\neq m'$, this implies $\bfE_{u,m}^*\bfE_{u',m'}^{}=\bfzero$.
If instead $u\neq u'$ then this implies that $\bfE_{u,m}^*\bfE_{u',m'}^{}$ has a single nonzero entry, and that this entry has value $1$.
This means there exists some $r,r'=1,\dotsc,R$, $r=r(u,m,u',m')$, $r'=r'(u,m,u',m')$ such that $\bfE_{u,m}^*\bfE_{\smash{u,m'}}^{}=\bfdelta_{r}^{}\bfdelta_{\smash{r'}}^*$.
\end{proof}

Other Steiner-like constructions of ETFs include hyperoval ETFs~\cite{FickusMJ16} and Tremain ETFs~\cite{FickusJMPW19}.
Beyond Steiner and Steiner-like techniques,
there are at least two other methods for constructing ETFs that make direct use of the incidence matrix of some kind of GDD.
One method leads to the \textit{phased BIBD ETFs} of~\cite{FickusJMPW19}:
if $\bfX$ is the $B\times V$ incidence matrix of a $\BIBD(V,K,1)$,
and $\bfPhi$ is any matrix obtained by replacing each $1$-valued entry of $\bfX$ with any unimodular scalar,
then the columns of $\bfPhi$ are immediately equiangular,
and the challenge is to design them so that they form a tight frame for their span.
Another method constructs ETFs with
$(D,N)=(\tfrac12M(M\pm1),M^2)$ from MOLS.
To elaborate, a $\TD(K,M)$ is a $K$-GDD of type $M^K$,
meaning by~\eqref{eq.GDD incidence matrix} that it has an $M^2\times KM$ incidence matrix $\bfX$ that satisfies
\begin{equation}
\label{eq.incidence matrix of TD 1}
\bfX\bfone=K\bfone,
\quad
\bfX^*\bfX=M\bfI+(\bfJ_K-\bfI_K)\otimes\bfJ_M.
\end{equation}
Here, the columns of $\bfX$ have support $M$,
and are arranged as $K$ groups of $M$ columns apiece,
where columns in a common group have disjoint support.
Together, these facts imply, in turn, that
\begin{equation}
\label{eq.incidence matrix of TD 2}
\bfX(\bfI_K\otimes\bfone_M)=\bfone_{M^2}^{}\bfone_K^*,
\quad
(\bfX\bfX^*)^2=M\bfX\bfX^*+K(K-1)\bfJ.
\end{equation}
At this point, the traditional approach is to let $\bfA=\bfX\bfX^*-K\bfI$ be the adjacency matrix of the TD's \textit{block graph},
and use~\eqref{eq.incidence matrix of TD 1} and \eqref{eq.incidence matrix of TD 2} to show that this graph is strongly regular with parameters $(M^2,K(M-1),M+K(K-3),K(K-1))$.
In the $M=2K$ case, applying Theorem~4.4 of~\cite{FickusJMPW18} to this graph then produces a real ETF with $(D,N)=(\frac12M(M-1),M^2)$ whose vectors sum to zero,
while applying this same result in the $M=2(K-1)$ case produces a real ETF with $(D,N)=(\frac12M(M+1),M^2)$ whose synthesis operator's row space contains the all-ones vector.

That said, a careful read of the literature reveals that this construction can be made more explicit,
and that doing so has repercussions for coding theory.
To elaborate, in~\cite{BrackenMW06}, MOLS are used to produce quasi-symmetric designs (QSDs) which, via the techniques of~\cite{McGuire97}, yield self-complementary binary codes that achieve equality in the Grey-Rankin bound.
In~\cite{JasperMF14}, such codes are shown to be equivalent to flat real ETFs.
A method for directly converting the incidence matrices of certain QSDs into synthesis operators of ETFs was also recently introduced~\cite{FickusJMP19}.
Distilling these ideas leads to the following streamlined construction:
let $\bfX$ be the incidence matrix of a $\TD(K,M)$,
let $\set{1\oplus\bff_m}_{m=1}^{M}$ be the columns of a possibly-complex Hadamard matrix of size $M$,
let $\bfF$ be the $(M-1)\times M$ synthesis operator of $\set{\bff_m}_{m=1}^{M}$,
and consider the $K(M-1)\times M^2$ matrix
\begin{equation}
\label{eq.first flat ETF from MOLS}
\bfPhi=(\bfI_K\otimes\bfF)\bfX^*.
\end{equation}
Using~\eqref{eq.regular simplex},
\eqref{eq.incidence matrix of TD 1},
and~\eqref{eq.incidence matrix of TD 2} along with the fact that $\bfF$ is flat,
it is straightforward to show that $\bfPhi$ is flat and satisfies
$\bfPhi\bfPhi^*
=M^2\bfI$
and
$\bfPhi^*\bfPhi
=M\bfX\bfX^*-K\bfJ$.
As such, the columns of $\bfPhi$ form a flat \textit{two-distance tight frame} (TDTF) for $\bbF^{K(M-1)}$~\cite{BargGOY15}.
Moreover, this TDTF is an ETF when $M=2K$.
In particular, if there exists a $\TD(K,2K)$ and a real Hadamard matrix of size $2K$,
then there exists a flat real $\ETF(K(2K-1),4K^2)$.
Using the equivalence between flat real ETFs and Grey-Rankin-bound-equality codes given in~\cite{JasperMF14}, or alternatively the equivalence between such ETFs and certain QSDs given in~\cite{FickusJMP19}, this recovers Theorem~1 of~\cite{BrackenMW06}.
In the $K=6$ case, that result gives the only known proof to date of the existence of a flat real $\ETF(66,144)$.

For any TD and corresponding flat regular simplex,
it is quickly verified that the corresponding TDTF~\eqref{eq.first flat ETF from MOLS} is \textit{centered}~\cite{FickusJMPW18} in the sense that
$\bfPhi\bfone
=\bfzero$,
namely that the all-ones vector is orthogonal to the row space of $\bfPhi$.
This fact leads to an analogous reinterpretation of the second main result of~\cite{BrackenMW06}:
in lieu of~\eqref{eq.first flat ETF from MOLS}, we instead consider the $[K(M-1)+1]\times M^2$ flat matrix
\begin{equation}
\label{eq.second flat ETF from MOLS}
\bfPsi
=\left[\begin{array}{l}\bfone^*\\\bfPhi\end{array}\right]
=\left[\begin{array}{c}\bfone^*\\(\bfI_K\otimes\bfF)\bfX^*\end{array}\right].
\end{equation}
Here, the properties of $\bfPhi$ immediately imply $\bfPsi\bfPsi^*=M^2\bfI$ and $\bfPsi^*\bfPsi=M\bfX\bfX^*-(K-1)\bfJ$,
meaning the columns of $\bfPsi$ form a flat TDTF.
However, unlike~\eqref{eq.first flat ETF from MOLS},
the columns of~\eqref{eq.second flat ETF from MOLS} are equiangular precisely when $M=2(K-1)$.
Replacing $K$ with $K+1$, this implies in particular that if there exists a $\TD(K+1,2K)$ and a real Hadamard matrix of size $2K$,
then there exists a flat real $\ETF(K(2K+1),4K^2)$.
This recovers Theorem~2 of~\cite{BrackenMW06} via the equivalences of~\cite{JasperMF14,FickusJMP19},
and gives the only known proof of the existence of a flat real $\ETF(78,144)$.

Simply put, if certain TDs exist, then certain ETFs exist.
In the next section, we introduce a new method of constructing ETFs from TDs.

\section{Constructing equiangular tight frames with group divisible designs}

In~\cite{DavisJ97}, Davis and Jedwab unify McFarland~\cite{McFarland73} and Spence~\cite{Spence77} difference sets under a single framework, and also generalize them so as to produce difference sets with parameters~\eqref{eq.Davis Jedwab parameters}.
McFarland's construction relies on nice algebro-combinatorial properties of the set of all hyperplanes in a finite-dimensional vector space over a finite field.
Davis and Jedwab exploit these properties to form various types of \textit{building sets}, which in some cases lead to difference sets.

In~\cite{JasperMF14}, it is shown that every harmonic ETF arising from a McFarland ETF is unitarily-equivalent to a Steiner ETF arising from an affine geometry.
When we applied a similar analysis to the building sets of~\cite{DavisJ97}, we discovered that they have an underlying TD-like incidence structure.
(We do not provide this analysis here since it is nontrivial and does not help us prove our results in their full generality.)
This eventually led us to the ETF construction technique of Theorem~\ref{thm.new ETF} below.
In short, our approach here is directly inspired by that of~\cite{DavisJ97}, though this is not apparent from our proof techniques.
In particular, the fact that the $L$ parameter in Definition~\ref{def.ETF types} is either $1$ or $-1$ is a generalization of Davis and Jedwab's notion of \textit{extended building sets} with ``$+$" and ``$-$" parameters, respectively.
To facilitate our arguments later on,
we now consider these types of parameters in greater detail:

\begin{theorem}
\label{thm.parameter types}
If $1<D<N$ and $(D,N)$ is type $(K,L,S)$, see Definition~\ref{def.ETF types}, then
\begin{equation}
\label{eq.type param in terms of ETF param}
S=\bigbracket{\tfrac{D(N-1)}{N-D}}^{\frac12},
\quad
K=\tfrac{NS}{D(S+L)},
\end{equation}
where $S\geq 2$.
Conversely, given $(D,N)$ such that $1<D<N$, and letting $L$ be either $1$ or $-1$,
if the above expressions for $S$ and $K$ are integers then $(D,N)$ is type $(K,L,S)$.

Moreover, in the case that the equivalent conditions above hold, scaling an ETF $\set{\bfphi_n}_{n=1}^{N}$ for $\bbF^D$ so that $\abs{\ip{\bfphi_n}{\bfphi_{n'}}}=1$ for all $n\neq n'$ gives that it and its Naimark complements $\set{\bfpsi_n}_{n=1}^{N}$ have tight frame constant $A=K(S+L)$ and
\begin{equation}
\label{eq.norms of pos or neg ETFs}
\norm{\bfphi_n}^2
=\bigbracket{\tfrac{D(N-1)}{N-D}}^{\frac12}
=S,
\quad
\norm{\bfpsi_n}^2
=\bigbracket{\tfrac{(N-D)(N-1)}{D}}^{\frac12}
=S(K-1)+KL,
\quad
\forall n=1,\dotsc,N.
\end{equation}
\end{theorem}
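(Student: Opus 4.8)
The statement has three parts --- the forward implication, the converse, and the description of the frame constant and the norms --- and I would dispatch each separately. The only frame-theoretic input required is the scaling identities \eqref{eq.ETF scaling} and the equal-norm tight frame relation $DA=NC$ recorded in the Preliminaries; everything else reduces to elementary algebra organized around two abbreviations.

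For the forward implication, introduce $T:=S(K-1)+L$ and $W:=(S+L)K-S$, noting that $W=(S+L)(K-1)+L$ as well, so that Definition~\ref{def.ETF types} reads $D=\tfrac{ST}{K}$ and $N=(S+L)T$. A one-line computation then gives $N-D=T\bigparen{(S+L)-\tfrac SK}=\tfrac{TW}{K}$ and, using $L^2=1$, $N-1=(S+L)T-1=SW$. Substituting these into $\tfrac{D(N-1)}{N-D}$ cancels both $T$ and $W$ and leaves $S^2$; taking the positive square root (as in Definition~\ref{def.ETF types}) gives the first identity in \eqref{eq.type param in terms of ETF param}, and $S\geq 2$ follows because $S^2=\tfrac{D(N-1)}{N-D}\geq D\geq 2$, using $N-1\geq N-D$ since $D\geq1$. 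The identity $K=\tfrac{NS}{D(S+L)}$ is then immediate from $D=\tfrac{ST}{K}$ and $N=(S+L)T$; here $S$, $S+L$, $T$ are all nonzero, as otherwise one of $D$, $N$ would vanish.

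For the converse, assume $1<D<N$ and that $S:=\bigbracket{\tfrac{D(N-1)}{N-D}}^{\frac12}$ and $K:=\tfrac{NS}{D(S+L)}$ are integers; as above $S\geq 2$ automatically. These hypotheses are precisely the pair of equations $D(N-1)=S^2(N-D)$ and $KD(S+L)=NS$. I would solve the second for $D=\tfrac{NS}{K(S+L)}$, substitute into the first, cancel the common factor $\tfrac{NS}{K(S+L)}$, and obtain $N-1=S\bigparen{K(S+L)-S}=S^2(K-1)+SKL$; expanding $(S+L)[S(K-1)+L]$ with $L^2=1$ shows this equals $(S+L)[S(K-1)+L]$, which is the expression for $N$ in \eqref{eq.ETF param in terms of type param}. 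Feeding this back into $D=\tfrac{NS}{K(S+L)}$ yields $D=\tfrac SK[S(K-1)+L]$, and rewriting the numerator as $S^2K-S(S-L)$ gives the equivalent form $D=S^2-\tfrac{S(S-L)}{K}$. Hence $(D,N)$ is type $(K,L,S)$.

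For the last part, assume the equivalent conditions hold and scale $\set{\bfphi_n}_{n=1}^{N}$ so that $\abs{\ip{\bfphi_n}{\bfphi_{n'}}}=1$ for $n\neq n'$; then \eqref{eq.ETF scaling} gives $\norm{\bfphi_n}^2=\bigbracket{\tfrac{D(N-1)}{N-D}}^{\frac12}$, which equals $S$ by the forward identity, and $\norm{\bfpsi_n}^2=\bigbracket{\tfrac{(N-D)(N-1)}{D}}^{\frac12}$. The relation $DA=NC$ with $C=S$ gives $A=\tfrac{NS}{D}=K(S+L)$, using $K=\tfrac{NS}{D(S+L)}$. Since $\norm{\bfphi_n}^2\norm{\bfpsi_n}^2=N-1$ and, by the forward computation, $N-1=SW$, we get $\norm{\bfpsi_n}^2=\tfrac{N-1}{S}=W=S(K-1)+KL$. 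I do not expect a genuine obstacle here: the result is bookkeeping once the substitutions $T$, $W$ are in place. The only fussy point is sign tracking: for the forward direction one must commit to the positive root to get $S\geq 2$ rather than $S\leq-2$, and for the final identities one checks $T=\tfrac N{S+L}>0$ and $W=\tfrac{N-1}S>0$ (which hold once $S\geq 2$ and $1<D<N$), so that the expressions for $A$ and $\norm{\bfpsi_n}^2$ in \eqref{eq.norms of pos or neg ETFs} come out positive.
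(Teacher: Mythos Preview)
Your proof is correct. The forward implication and the frame-constant paragraph are essentially the paper's argument, reorganized with the convenient abbreviations $T=S(K-1)+L$ and $W=(S+L)K-S$; the paper instead computes $N-1=S[S(K-1)+KL]$ and $\tfrac{N}{D}-1=\tfrac1S[S(K-1)+KL]$ and multiplies/divides, which is the same identity in different clothing.

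Your converse, however, is genuinely simpler than the paper's. The paper starts from $\tfrac1S=L\bigl(\tfrac{N}{DK}-1\bigr)$, squares, and manipulates this into a quadratic in $N$ with solutions $N=DK+\tfrac12\{1\pm[4DK(K-1)+1]^{1/2}\}$; it then argues that the discriminant is the square of an odd integer, introduces an auxiliary integer $J$ via $DK(K-1)=J(J-1)$, expresses $J$ in terms of $(K,L,S)$, and finally substitutes back to recover the formulas in Definition~\ref{def.ETF types}. You bypass all of this by treating the two hypotheses as the linear system $D(N-1)=S^2(N-D)$ and $KD(S+L)=NS$, eliminating $D$, and reading off $N-1=S^2(K-1)+SKL=(S+L)[S(K-1)+L]-1$ directly. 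This is shorter and avoids the quadratic-formula detour entirely; the paper's route has the minor byproduct of exhibiting $4DK(K-1)+1$ as a perfect square, but that fact is not used elsewhere.
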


\begin{proof}
Whenever $(D,N)$ is type $(K,L,S)$ we have that $L$ is either $1$ or $-1$ by assumption, at which point the fact that $L^2=1$ coupled with~\eqref{eq.ETF param in terms of type param} gives
\begin{align*}
\nonumber
N-1
&=(S+L)[S(K-1)+L]-1
=S^2(K-1)+SKL
=S[S(K-1)+KL],\\
\tfrac{N}{D}-1
&=\tfrac{K(S+L)[S(K-1)+L]}{S[S(K-1)+L]}-1
=\tfrac{K(S+L)}{S}-1
=\tfrac1{S}[S(K-1)+KL].
\end{align*}
Multiplying and dividing these expressions immediately implies that
\begin{equation}
\label{eq.N in terms of D and K 0}
\bigbracket{\tfrac{D(N-1)}{N-D}}^{\frac12}=S,
\quad
\bigbracket{\tfrac{(N-D)(N-1)}{D}}^{\frac12}=S(K-1)+KL.
\end{equation}
Here, $S$ is an integer by assumption, and is clearly positive.
Moreover, if $S=1$ then~\eqref{eq.N in terms of D and K 0} implies $D=1$.
Since $D>1$ by assumption, we thus have $S\geq 2$.
Continuing, \eqref{eq.ETF param in terms of type param} further implies
\begin{equation*}
\tfrac{NS}{D(S+L)}
=\tfrac{K(S+L)[S(K-1)+L]}{S[S(K-1)+L]}\tfrac{S}{S+L}
=K.
\end{equation*}

Conversely, now assume that $S$ and $K$ are defined by~\eqref{eq.type param in terms of ETF param}, where $L$ is either $1$ or $-1$, and that $S$ and $K$ are integers.
As before, the fact that $D>1$ implies that $S\geq 2$ and so $K>0$.
We solve for $N$ in terms of $D$, $K$, and $L$.
Here, \eqref{eq.type param in terms of ETF param} gives $\frac{N}{DK}=\frac{S+L}{S}=1+\frac{L}{S}$.
Since $L^2=1$, this implies
\begin{equation}
\label{eq.N in terms of D and K 1}
\bigbracket{\tfrac{N-D}{D(N-1)}}^{\frac12}
=\tfrac1{S}
=L\bigparen{\tfrac{N}{DK}-1}.
\end{equation}
Squaring this equation and multiplying the result by $N-1$ gives
\begin{equation*}
\tfrac{N}{D}-1
=\tfrac{N-D}{D}
=(N-1)\bigparen{\tfrac{N}{DK}-1}^2
=N\bigparen{\tfrac{N}{DK}-1}^2-\tfrac{N}{DK}\bigparen{\tfrac{N}{DK}-2}-1.
\end{equation*}
Adding $1$ to this equation and multiplying by $\tfrac{(DK)^2}{N}$ then leads to a quadratic in $N$:
\begin{equation*}
DK^2
=(N-DK)^2-(N-2DK)
=N^2-(2DK+1)N+DK(DK+2).
\end{equation*}
Applying the quadratic formula then gives
\begin{equation}
\label{eq.N in terms of D and K 2}
N=DK+\tfrac12\bigset{1\pm\bigbracket{4DK(K-1)+1}^{\frac12}}.
\end{equation}
As such, \eqref{eq.N in terms of D and K 1} becomes
$\tfrac1{S}
=L\bigparen{\tfrac{N}{DK}-1}
=\tfrac{L}{2DK}\bigset{1\pm\bigbracket{4DK(K-1)+1}^{\frac12}}$,
implying ``$+$" and ``$-$" here correspond to $L=1$ and $L=-1$ respectively, that is,
\begin{equation}
\label{eq.N in terms of D and K 3}
\tfrac1{S}
=\tfrac{L}{2DK}\bigset{1+L\bigbracket{4DK(K-1)+1}^{\frac12}}
=\tfrac1{2DK}\bigset{L+\bigbracket{4DK(K-1)+1}^{\frac12}}.
\end{equation}
Moreover, since $N$ is an integer,
\eqref{eq.N in terms of D and K 2} implies that $4DK(K-1)+1$ is the square of an odd integer,
that is, that $4DK(K-1)+1=(2J-1)^2=4J(J-1)+1$ or equivalently that $DK(K-1)=J(J-1)$ for some positive integer $J$.
Writing $D=\frac{J(J-1)}{K(K-1)}$,~\eqref{eq.N in terms of D and K 2} and \eqref{eq.N in terms of D and K 3} then become
\begin{align}
\label{eq.N in terms of D and K 4}
N
&=DK+\tfrac12\bigset{1+L\bigbracket{4DK(K-1)+1}^{\frac12}}
=DK+\tfrac12[1+L(2J-1)],
\\
\nonumber
\tfrac1{S}
&=\tfrac1{2DK}\bigset{L+\bigbracket{4DK(K-1)+1}^{\frac12}}
=\tfrac{K-1}{2J(J-1)}[L+(2J-1)]
=\left\{\begin{array}{cl}
\tfrac{K-1}{J-1},&L=1\smallskip\\
\tfrac{K-1}{J},&L=-1
\end{array}\right\}
=\tfrac{2(K-1)}{2J-L-1}.
\end{align}
That is, $J=S(K-1)+\frac12(L+1)$.
Substituting this into $D=\frac{J(J-1)}{K(K-1)}$ and~\eqref{eq.N in terms of D and K 4} and again using the fact that $L^2=1$ then gives the expressions for $D$ and $N$ given in Definition~\ref{def.ETF types}:
\begin{align*}
D
&=\tfrac{[S(K-1)+\frac12(L+1)][S(K-1)+\frac12(L-1)]}{K(K-1))}
=\tfrac{S^2(K-1)^2+S(K-1)L}{K(K-1))}
=\tfrac{S}{K}[S(K-1)+L],\\
N
&=S[S(K-1)+L]+\tfrac12\{1+L[2S(K-1)+L]\}
=(S+L)[S(K-1)+L].
\end{align*}

Finally, in the case where $(D,N)$ is type $(K,L,S)$,
if $\set{\bfphi_n}_{n=1}^{N}$ is an ETF for $\bbF^D$,
and is without loss of generality scaled so that $\abs{\ip{\bfphi_n}{\bfphi_{n'}}}=1$ for all $n\neq n'$,
then~\eqref{eq.ETF scaling} and~\eqref{eq.N in terms of D and K 0} immediately imply that $\set{\bfphi_n}_{n=1}^{N}$ and any one of its Naimark complements $\set{\bfpsi_n}_{n=1}^{N}$ satisfy~\eqref{eq.norms of pos or neg ETFs},
and that both are tight frames with tight frame constant $A=\frac{NS}{D}=K(S+L)$.
\end{proof}

Theorem~\ref{thm.parameter types} implies that the $(D,N)$ parameters of an ETF with $N>D>1$ are type $(K,L,S)$ with $K=1$ if and only if that ETF is a regular simplex, and moreover that this only occurs when $L=1$ and $S=D$.
Indeed, for any $D>1$, the pair $(D,N)=(D,D+1)$ satisfies~\eqref{eq.ETF param in terms of type param} when $(K,L,S)=(1,1,D)$.
Conversely, in light of~\eqref{eq.norms of pos or neg ETFs},
an ETF of type $(1,L,S)$ has a Naimark complement $\set{\bfpsi_n}_{n=1}^{N}$ with the property that $\norm{\bfpsi_n}^2=L=1$ and $\abs{\ip{\bfpsi_n}{\bfpsi_{n'}}}=1$ for all $n\neq n'$, namely a Naimark complement that is an $\ETF(1,N)$.

We also emphasize that it is sometimes possible for the parameters of a single ETF to be simultaneously positive and negative for different choices of $K$.
In particular, if $D>1$ and $(D,D+1)$ is type $(K,-1,S)$, then \eqref{eq.type param in terms of ETF param} gives $S=D$ and $K=\frac{NS}{D(S+L)}=\frac{(D+1)D}{D(D-1)}=\frac{D+1}{D-1}=1+\frac{2}{D-1}$.
Since $K$ is an integer, this implies either $D=2$ or $D=3$.
And, letting $(K,L,S)$ be $(3,-1,2)$ and $(2,-1,3)$ in~\eqref{eq.ETF param in terms of type param} indeed gives that $(D,N)$ is $(2,3)$ and $(3,4)$, respectively.

In the next section, we provide a much more thorough discussion of positive and negative ETFs,
including some other examples of ETFs that are both.
For now, we turn to our main result, which shows how to combine a given $\ETF(D,N)$ whose parameters are type $(K,L,S)$ with a certain $K$-GDD to produce a new ETF whose parameters are type $(K,L,S')$ for some $S'>S$.
Here, as with any GDD, we require $K\geq 2$.
In light of the above discussion,
this is not a significant restriction since $(D,N)$ has type $(1,L,S)$ if and only if $L=1$ and $S=D$,
and we already know that ETFs of type $(1,1,D)$ exist for all $D>1$, being regular simplices.

\begin{theorem}
\label{thm.new ETF}
Assume an ETF of type $(K,L,S)$ exists where $K\geq 2$,
and let $M=S(K-1)+L$.
The necessary conditions~\eqref{eq.GDD necessary conditions} on the existence of a $K$-GDD of type $M^U$ reduce to having
\begin{equation}
\label{eq.new ETF 1}
U\geq K,
\quad
\tfrac{U-1}{K-1}\in\bbZ,
\quad
\tfrac{(S-L)U(U-1)}{K(K-1)}\in\bbZ.
\end{equation}
Moreover, if such a GDD exists,
and $U$ has the additional property that
\begin{equation}
\label{eq.new ETF 2}
\tfrac{(K-2)(U-1)}{(S+L)(K-1)}\in\bbZ,
\end{equation}
then there exists an ETF of type $(K,L,S')$ where $S'=S+R=\tfrac{MU-L}{K-1}$,
where $R=\tfrac{M(U-1)}{K-1}$.\smallskip

In particular, under these hypotheses, \smash{$W:=\tfrac{R}{S+L}\in\bbZ$},
and without loss of generality writing the given ETF as \smash{$\set{\bfphi_{m,i}}_{m=1,}^{M}\,_{i=1}^{S+L}$} where $\norm{\bfphi_{m,i}}^2=S$ for all $m$ and $i$,
letting $\set{\bfE_{u,m}}_{u=1,}^{U}\,_{m=1}^{M}$ be the embedding operators of the GDD (Definition~\ref{def.embeddings}),
letting $\set{\bfdelta_u}_{u=1}^{U}$ be the standard basis for $\bbF^U$,
and letting $\set{\bfe_i}_{i=1}^{S+L}$ and $\set{1\oplus\bff_j}_{j=0}^{W}$ be the columns of possibly-complex Hadamard matrices of size $S+L$ and $W+1$, respectively,
then the following vectors form an ETF of type $(K,L,S')$:
\begin{equation}
\label{eq.new ETF 3}
\set{\bfpsi_{u,m,i,j}}_{u=1,}^{U}\,_{m=1,}^{M}\,_{i=1,}^{S+L}\,_{j=0}^{W},
\quad
\bfpsi_{u,m,i,j}:=(\bfdelta_u\otimes\bfphi_{m,i})\oplus\bigparen{\bfE_{u,m}(\bfe_i\otimes\bff_j)}.
\end{equation}

As special cases of this fact, an ETF of type $(K,L,S')$ exists whenever either:
\begin{enumerate}
\renewcommand{\labelenumi}{(\alph{enumi})}
\item
$U$ is sufficiently large and satisfies \eqref{eq.new ETF 1} and \eqref{eq.new ETF 2};
\item
there exists a $K$-GDD of type $M^U$, provided we also have that $S+L$ divides $K-2$.
\end{enumerate}
\end{theorem}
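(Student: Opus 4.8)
The plan is to prove the three assertions in order: the reduction of the GDD necessary conditions, the explicit construction, and the two corollaries. For the reduction, I would substitute $M=S(K-1)+L$ into~\eqref{eq.GDD necessary conditions} and use $L^2=1$. Since $\tfrac{M(U-1)}{K-1}=S(U-1)+\tfrac{L(U-1)}{K-1}$, the second condition there is equivalent to $\tfrac{U-1}{K-1}\in\bbZ$. For the third, note $M\equiv L-S\pmod K$, so $M^2\equiv(S-L)^2\pmod K$, while $D=\tfrac{SM}{K}$ being an integer (Definition~\ref{def.ETF types}) forces $K\mid S(S-L)$. Writing $U-1=(K-1)w$, the third condition becomes $K\mid M^2Uw$, i.e.\ $K\mid(S-L)^2Uw$; this I would show equivalent to $K\mid(S-L)Uw$, i.e.\ to $\tfrac{(S-L)U(U-1)}{K(K-1)}\in\bbZ$. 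One direction is trivial; for the converse, fix a prime $p\mid K$ and let $p^e$ be the exact power of $p$ in $K$: if $p\nmid(S-L)$ then $p^e\mid Uw$, and if $p\mid(S-L)$ then $K\mid S(S-L)$ forces $p^e\mid(S-L)$ (otherwise $p$ divides both $S$ and $S-L$, hence divides $L=\pm1$), so in either case $p^e\mid(S-L)Uw$. This gives~\eqref{eq.new ETF 1}.

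For the construction, put $R=\tfrac{M(U-1)}{K-1}$, $S'=S+R$ and $W=\tfrac{R}{S+L}=\tfrac{Mw}{S+L}$. The parameter bookkeeping goes as follows: since $S\equiv-L\pmod{S+L}$ we get $M\equiv-L(K-2)\pmod{S+L}$, whence $W\in\bbZ$ precisely by~\eqref{eq.new ETF 2}; also $S'(K-1)+L=M+R(K-1)=MU$, so a type-$(K,L,S')$ ETF would be an $\ETF(D',N')$ with $D'=\tfrac{S'MU}{K}$ and $N'=(S'+L)MU$. Using $B=\tfrac{M^2U(U-1)}{K(K-1)}$ from~\eqref{eq.GDD incidence matrix}, one checks the $UM(S+L)(W+1)=(S'+L)MU=N'$ vectors of~\eqref{eq.new ETF 3} lie in $\bbF^{UD+B}$ with $UD+B=\tfrac{SMU}{K}+\tfrac{M^2U(U-1)}{K(K-1)}=\tfrac{MU}{K}(S+R)=D'$, so the counts match a type-$(K,L,S')$ ETF.

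It then remains to verify $\set{\bfpsi_{u,m,i,j}}$ is a tight, equiangular frame. For tightness, compute $\bfPsi\bfPsi^*$ blockwise, where $\bfPhi,\bfPsi$ are the synthesis operators of $\set{\bfphi_{m,i}}$ and $\set{\bfpsi_{u,m,i,j}}$. The cross blocks vanish since $\sum_{j=0}^{W}\bff_j=\bfzero$ by~\eqref{eq.regular simplex}, so $\sum_j(\bfe_i\otimes\bff_j)=\bfzero$. Summing over $j$ and $u$, the top-left block is $(W+1)\bfI_U\otimes(\bfPhi\bfPhi^*)=(W+1)K(S+L)\bfI=K(S'+L)\bfI$, using that the given ETF has tight frame constant $K(S+L)$ (Theorem~\ref{thm.parameter types}) and $(W+1)(S+L)=S'+L$; the bottom-right block is $(S+L)(W+1)\sum_{u,m}\bfE_{u,m}\bfE_{u,m}^*$, and since each $\bfE_{u,m}$ has orthonormal standard-basis columns summing to $\bfx_{u,m}$, $\sum_{u,m}\bfE_{u,m}\bfE_{u,m}^*=\operatorname{diag}(\bfX\bfone)=K\bfI$ by~\eqref{eq.GDD incidence matrix}, so this block is $K(S'+L)\bfI$ too; hence $\bfPsi\bfPsi^*=K(S'+L)\bfI_{D'}$. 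For equiangularity, expand $\ip{\bfpsi_{u,m,i,j}}{\bfpsi_{u',m',i',j'}}$ as a ``$\bfphi$ term'' $\delta_{u,u'}\ip{\bfphi_{m,i}}{\bfphi_{m',i'}}$ plus an ``embedding term'' $(\bfe_i\otimes\bff_j)^*\bfE_{u,m}^*\bfE_{u',m'}(\bfe_{i'}\otimes\bff_{j'})$, the latter evaluated by Lemma~\ref{lem.embed}. For any two distinct tuples: if $u\ne u'$, the $\bfphi$ term is $0$ and the embedding term is a product of unimodular Hadamard entries; if $u=u'$ but $(m,i)\ne(m',i')$, the embedding term vanishes (either $\bfE_{u,m}^*\bfE_{u,m'}=\bfzero$, or $\ip{\bfe_i}{\bfe_{i'}}=0$) while $\abs{\ip{\bfphi_{m,i}}{\bfphi_{m',i'}}}=1$; and if $u=u'$, $(m,i)=(m',i')$ and $j\ne j'$, the two terms are $\norm{\bfphi_{m,i}}^2=S$ and $(S+L)[(W+1)\delta_{j,j'}-1]=-(S+L)$, summing to $-L$. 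Thus $\abs{\ip{\bfpsi_{u,m,i,j}}{\bfpsi_{u',m',i',j'}}}=1$ for all distinct tuples, and $\norm{\bfpsi_{u,m,i,j}}^2=S+(S+L)W=S'$, so $\set{\bfpsi_{u,m,i,j}}$ is an ETF whose parameters are type $(K,L,S')$.

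For the corollaries, (a) follows from the asymptotic existence of GDDs~\cite{Chang76,LamkenW00}: for $U$ large satisfying~\eqref{eq.GDD necessary conditions}, equivalently~\eqref{eq.new ETF 1}, a $K$-GDD of type $M^U$ exists, and the construction then applies once~\eqref{eq.new ETF 2} also holds. For (b), the existence of the $K$-GDD already forces~\eqref{eq.new ETF 1}, hence $\tfrac{U-1}{K-1}\in\bbZ$, so if $(S+L)\mid(K-2)$ then $\tfrac{(K-2)(U-1)}{(S+L)(K-1)}=\tfrac{K-2}{S+L}\cdot\tfrac{U-1}{K-1}\in\bbZ$, i.e.\ \eqref{eq.new ETF 2} is automatic. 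I expect the main obstacle to be not a single hard estimate but getting all the layers to interlock: the identities $S'(K-1)+L=MU$, $(W+1)(S+L)=S'+L$ and $UD+B=D'$, together with the cancellation $S-(S+L)=-L$, must line up simultaneously, and the reduction of the GDD conditions genuinely uses $K\mid S(S-L)$ together with $L=\pm1$ rather than being a purely formal simplification.
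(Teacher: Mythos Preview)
Your proposal is correct and follows the paper's argument closely in its core: the case analysis for equiangularity via Lemma~\ref{lem.embed}, the parameter identities $S'(K-1)+L=MU$, $(W+1)(S+L)=S'+L$, $UD+B=D'$, and the handling of corollaries (a) and (b) all match. There are two places where you take a slightly different route. First, for the reduction of~\eqref{eq.GDD necessary conditions} to~\eqref{eq.new ETF 1}, the paper expands $B=\tfrac{M^2U(U-1)}{K(K-1)}$ algebraically via $\tfrac{M}{K-1}=S+\tfrac{L}{K-1}$ and $\tfrac{M}{K}=S-\tfrac{S-L}{K}$, whereas you argue via the congruence $M\equiv L-S\pmod K$ and a prime-by-prime step exploiting $K\mid S(S-L)$ and $\gcd(S,S-L)\mid L$; both are valid and yours makes the role of $L=\pm1$ more transparent. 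Second, for showing~\eqref{eq.new ETF 3} is an ETF, the paper simply verifies equal norm and unimodular off-diagonal inner products, invoking the fact that equality in the Welch bound forces tightness; you instead verify tightness directly by computing $\bfPsi\bfPsi^*$ blockwise. Your computation is correct (the cross blocks vanish via $\sum_j\bff_j=\bfzero$, and $\sum_{u,m}\bfE_{u,m}^{}\bfE_{u,m}^*=\operatorname{diag}(\bfX\bfone)=K\bfI$ handles the bottom-right block), and in fact the paper remarks after its proof that the row norms of $\bfPsi$ work out to $(W+1)K(S+L)$ in both blocks precisely because $K=\tfrac{NS}{D(S+L)}$---which is exactly what your direct calculation establishes.
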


\begin{proof}
Since $M=S(K-1)+L$,
\begin{equation}
\label{eq.pf of new neg ETF 1}
\tfrac{M}{K-1}
=S+\tfrac{L}{K-1}
=(S+L)-L\bigparen{\tfrac{K-2}{K-1}},
\quad
\tfrac{M}{K}
=S-\tfrac{S-L}{K}.
\end{equation}
As such, the replication number of any $K$-GDD of type $M^U$ is $R=\frac{M(U-1)}{K-1}=S(U-1)+L(\frac{U-1}{K-1})$.
In particular, such a GDD can only exist when $K-1$ necessarily divides $U-1$.
Moreover, multiplying the expressions in~\eqref{eq.pf of new neg ETF 1} gives that the number of blocks in any such GDD is
\begin{align*}
B
&=\tfrac{M^2U(U-1)}{K(K-1)}\\
&=\bigparen{S-\tfrac{S-L}{K}}\bigparen{S+\tfrac{L}{K-1}}U(U-1)\\
&=S^2U(U-1)+LSU\tfrac{U-1}{K-1}-\tfrac{S(S-L)}{K}U(U-1)-L\tfrac{(S-L)U(U-1)}{K(K-1)}.
\end{align*}
Here, since our initial $\ETF(D,N)$ is type $(K,L,S)$, $\frac{S(S-L)}{K}=S^2-D$ is an integer, and so the above expression for $B$ is an integer precisely when $K(K-1)$ divides $(S-L)U(U-1)$.
To summarize, since $M=S(K-1)+L$ where $K$ divides $S(S-L)$,
the necessary conditions~\eqref{eq.GDD necessary conditions} on the existence of a $K$-GDD of type $M^U$ reduce to having~\eqref{eq.new ETF 1}.
These necessary conditions are known to be asymptotically sufficient~\cite{Chang76,LamkenW00}:
for this fixed $K$ and $M$, there exists $U_0$ such that a $K$-GDD of type $M^U$ exists for any $U\geq U_0$ that satisfies~\eqref{eq.new ETF 1}.
Regardless, to apply our construction below with any given $K$-GDD of type $M^U$,
we only need $U$ to satisfy the additional property that
\begin{equation*}
W
=\tfrac{R}{S+L}
=\tfrac{M(U-1)}{(S+L)(K-1)}
=\tfrac{U-1}{S+L}\bigbracket{(S+L)-L\bigparen{\tfrac{K-2}{K-1}}}
=(U-1)-L\tfrac{(K-2)(U-1)}{(S+L)(K-1)}
\end{equation*}
is an integer,
namely to satisfy~\eqref{eq.new ETF 2}.
Since $K-1$ necessarily divides $U-1$,
this is automatically satisfied whenever $S+L$ happens to divide $K-2$,
and some of the ETFs we will identify in the next section will have this nice property.
Regardless, there are always an infinite number of values of $U$ which satisfy~\eqref{eq.new ETF 1} and~\eqref{eq.new ETF 2},
including, for example, all $U\equiv 1\bmod (S+L)K(K-1)$.

Turning to the construction itself,
the fact that the given $\ETF(D,N)$ is type $(K,L,S)$ implies
\begin{equation}
\label{eq.proof of new ETF 1}
D
=\tfrac{S}{K}[S(K-1)+L]
=\tfrac{SM}{K},
\quad
N
=(S+L)[S(K-1)+L]
=(S+L)M.
\end{equation}
In particular, since $N=(S+L)M$, the vectors in our initial $\ETF(D,N)$ can indeed be indexed as \smash{$\set{\bfphi_{m,i}}_{m=1,}^{M}\,_{i=1}^{S+L}$}.
Moreover,
\begin{equation}
\label{eq.proof of new ETF 2}
MU
=M(U-1)+M
=(K-1)R+[S(K-1)+L]
=(S+R)(K-1)+L.
\end{equation}
As such, the number of vectors in the collection~\eqref{eq.new ETF 3} is
\begin{equation}
\label{eq.proof of new ETF 3}
N'
=UM(S+L)(W+1)
=(S+L)(\tfrac{R}{S+L}+1)MU
=[(S+R)+L][(S+R)(K-1)+L].
\end{equation}
Also, for each $i$ and $j$,
$\bfe_i\otimes\bff_j$ lies in a space of dimension $\bbF^{(S+L)W}=\bbF^R$.
And, for each $u$ and $m$, $\bfE_{u,m}$ is a $B\times R$ matrix.
As such, for any $u$, $m$, $i$ and $j$, $\bfpsi_{u,m,i,j}$ is a well-defined vector in $\bbF^{D'}$ where, by combining~\eqref{eq.GDD incidence matrix},
\eqref{eq.proof of new ETF 1} and~\eqref{eq.proof of new ETF 2}, we have
\begin{equation}
\label{eq.proof of new ETF 4}
D'
=UD+B
=U\tfrac{SM}{K}+\tfrac{MU}{K}R
=\tfrac{S+R}{K}MU
=\tfrac{S+R}{K}[(S+R)(K-1)+L].
\end{equation}
Comparing~\eqref{eq.proof of new ETF 3} and~\eqref{eq.proof of new ETF 4} against~\eqref{eq.ETF param in terms of type param}, we see that $(D',N')$ is indeed type $(K,L,S')$ where $S'=S+R$.
Here, \eqref{eq.proof of new ETF 2} further implies $S'=S+R=\tfrac{MU-L}{K-1}$.

Continuing, since $(D',N')$ is type $(K,L,S')$,
Theorem~\ref{thm.parameter types} gives that the Welch bound for $N'$ vectors in $\bbF^{D'}$ is \smash{$\frac1{S'}$}.
As such, to show \eqref{eq.new ETF 3} is an ETF for $\bbF^{D'}$, it suffices to prove that $\norm{\bfpsi_{u,m,i,j}}^2=S'$ for all $u$, $m$, $i$, $j$,
and that $\ip{\bfpsi_{u,m,i,j}}{\bfpsi_{u',m',i',j'}}$ is unimodular whenever $(u,m,i,j)\neq(u',m',i',j')$.
Here, for any $u,u'=1,\dotsc,U$, $m,m'=1,\dotsc,M$, $i,i'=1,\dotsc,S+L$, $j,j'=0,\dotsc,W$,
\begin{align}
\ip{\bfpsi_{u,m,i,j}}{\bfpsi_{u',m',i',j'}}
\nonumber
&=\ip{(\bfdelta_u\otimes\bfphi_{m,i})\oplus\bigparen{\bfE_{u,m}(\bfe_i\otimes\bff_j)}}{(\bfdelta_{u'}\otimes\bfphi_{m',i'})\oplus\bigparen{\bfE_{u',m'}(\bfe_{i'}\otimes\bff_{j'})}}\\
\nonumber
&=\ip{\bfdelta_u}{\bfdelta_{u'}}\ip{\bfphi_{m,i}}{\bfphi_{m',i'}}
+\ip{\bfE_{u,m}(\bfe_i\otimes\bff_j)}{\bfE_{u',m'}(\bfe_{i'}\otimes\bff_{j'})}\\
\label{eq.proof of new ETF 5}
&=\ip{\bfdelta_u}{\bfdelta_{u'}}\ip{\bfphi_{m,i}}{\bfphi_{m',i'}}
+\ip{\bfe_i\otimes\bff_j}{\bfE_{u,m}^*\bfE_{u',m'}(\bfe_{i'}\otimes\bff_{j'})}.
\end{align}
When $u=u'$, $m=m'$, $i=i'$ and $j=j'$,~\eqref{eq.proof of new ETF 5} indeed becomes
\begin{equation*}
\norm{\bfpsi_{u,m,i,j}}^2
=\norm{\bfdelta_u}^2\norm{\bfphi_{m,i}}^2+\norm{\bfe_i}^2\norm{\bff_j}^2
=1S+(S+L)W
=S+R
=S'.
\end{equation*}
As such, all that remains is to show that~\eqref{eq.proof of new ETF 5} is unimodular in all other cases.
For instance, if $u\neq u'$ then Lemma~\ref{lem.embed} gives that for any $m,m'$ there exists $r,r'=1,\dotsc,R$ such that
$\bfE_{u,m}^*\bfE_{u',m'}^{}=\bfdelta_{r}^{}\bfdelta_{\smash{r'}}^*$ meaning in this case~\eqref{eq.proof of new ETF 5} becomes
\begin{equation*}
\ip{\bfpsi_{u,m,i,j}}{\bfpsi_{u',m',i',j'}}
=0\ip{\bfphi_{m,i}}{\bfphi_{m',i'}}
+\ip{\bfe_i\otimes\bff_j}{\bfdelta_{r}^{}\bfdelta_{\smash{r'}}^*(\bfe_{i'}\otimes\bff_{j'})}
=\overline{(\bfe_i\otimes\bff_j)(r)}(\bfe_{i'}\otimes\bff_{j'})(r'),
\end{equation*}
which is unimodular, being a product of unimodular numbers.
If we instead have $u=u'$ and $m\neq m'$ then Lemma~\ref{lem.embed} gives $\bfE_{u,m}^*\bfE_{u,m'}^{}=\bfzero$ and so~\eqref{eq.proof of new ETF 5} becomes
\begin{equation*}
\ip{\bfpsi_{u,m,i,j}}{\bfpsi_{u,m',i',j'}}
=\norm{\bfdelta_u}^2\ip{\bfphi_{m,i}}{\bfphi_{m',i'}}
+\ip{\bfe_i\otimes\bff_j}{\bfzero(\bfe_{i'}\otimes\bff_{j'})}
=\ip{\bfphi_{m,i}}{\bfphi_{m',i'}},
\end{equation*}
which is unimodular since $\set{\bfphi_{m,i}}_{m=1,}^{M}\,_{i=1}^{S+L}$ is an ETF of type $(K,L,S)$, and has been scaled so that $\norm{\bfphi_{m,i}}^2=S$ for all $m$ and $i$.
Next, if we instead have $u=u'$ and $m=m'$ then Lemma~\ref{lem.embed} gives $\bfE_{u,m}^*\bfE_{u,m}^{}=\bfI$ and so~\eqref{eq.proof of new ETF 5} becomes
\begin{equation}
\label{eq.proof of new ETF 6}
\ip{\bfpsi_{u,m,i,j}}{\bfpsi_{u,m,i',j'}}
=\norm{\bfdelta_u}^2\ip{\bfphi_{m,i}}{\bfphi_{m,i'}}+\ip{\bfe_i\otimes\bff_j}{\bfe_{i'}\otimes\bff_{j'}}
=\ip{\bfphi_{m,i}}{\bfphi_{m,i'}}+\ip{\bfe_i}{\bfe_{i'}}\ip{\bff_j}{\bff_{j'}}.
\end{equation}
In particular, when $u=u'$, $m=m'$ and $i\neq i'$, the fact that $\set{\bfe_i}_{i=1}^{S+L}$ is orthogonal implies that~\eqref{eq.proof of new ETF 6} reduces to $\ip{\bfphi_{m,i}}{\bfphi_{m,i'}}$, which is unimodular for the same reason as the previous case.
The final remaining case is the most interesting: when $u=u'$, $m=m'$, $i=i'$ but $j\neq j'$,
we have $0=\ip{1\oplus\bff_j}{1\oplus\bff_{j'}}=1+\ip{\bff_j}{\bff_{j'}}$ and so $\ip{\bff_j}{\bff_{j'}}=-1$;
when combined with the fact that $\norm{\bfphi_{m,i}}^2=S$ and $\norm{\bfe_i}^2=S+L$, this implies that in this case~\eqref{eq.proof of new ETF 6} becomes
\begin{equation*}
\ip{\bfpsi_{u,m,i,j}}{\bfpsi_{u,m,i,j'}}
=\norm{\bfphi_{m,i}}^2+\norm{\bfe_i}^2\ip{\bff_j}{\bff_{j'}}
=S+(S+L)(-1)
=-L,
\end{equation*}
where, in Definition~\ref{def.ETF types}, we have assumed that $L$ is either $1$ or $-1$.
\end{proof}

The construction of Theorem~\ref{thm.new ETF} leads to the concept of ETFs of type $(K,L,S)$.
To clarify, the construction~\eqref{eq.new ETF 3} and the above proof of its equiangularity is valid for any initial $\ETF(D,N)$ and any $K$-GDD of type $M^U$, provided $M=\frac{N}{S+L}$ for some $L\in\set{-1,1}$, and $S+L$ divides \smash{$R=\frac{M(U-1)}{K-1}$}.
However, it turns out that the first $UD$ rows of the corresponding synthesis operator have squared-norm $(W+1)\frac{NS}{D}$, whereas the last $B$ rows have squared-norm $(W+1)K(S+L)$.
As such, the equiangular vectors~\eqref{eq.new ETF 3} are only a tight frame when \smash{$K=\frac{NS}{D(S+L)}$}.
Applying the techniques of the proof of Theorem~\ref{thm.parameter types} then leads to the expressions for $(D,N)$ in terms of $(K,L,S)$ given in~\eqref{eq.ETF param in terms of type param}.
These facts are not explicitly discussed in our proof above since any equal-norm vectors that attain the Welch bound are automatically tight.

\begin{remark}
\label{rem.recursive}
There is no apparent value to recursively applying Theorem~\ref{thm.new ETF}.
To elaborate, given an $\ETF(D,N)$ of type $(K,L,S)$ and a $K$-GDD of type $M^U$ where $M=S(K-1)+L$ and $U$ satisfies~\eqref{eq.new ETF 2},
Theorem~\ref{thm.new ETF} yields an $\ETF(D',N')$ of type $(K,L,S')$ where $S'=\frac{MU-L}{K-1}$.
The ``$M$" parameter of this new ETF is thus $M'=S'(K-1)+L=(MU-L)+L=MU$,
and we can apply Theorem~\ref{thm.new ETF} a second time provided we have a $K$-GDD of type $(MU)^{U'}$ where $U'$ satisfies the appropriate analog of~\eqref{eq.new ETF 2}, namely
\begin{equation}
\label{eq.recursive 1}
\tfrac{(K-2)(U'-1)}{(S'+L)(K-1)}\in\bbZ.
\end{equation}
Doing so yields an ETF of type $(K,-1,S'')$ where $S''=\tfrac{M'U'-L}{K-1}=\tfrac{MUU'-L}{K-1}$.
However, under these hypotheses, there is a simpler way to construct an ETF of this same type.
Indeed, using the first GDD to fill the holes of the second GDD via Lemma~\ref{lem.filling holes} produces a $K$-GDD of type $M^{UU'}$.
Moreover, $UU'$ is a value of ``$U$" that satisfies~\eqref{eq.new ETF 2}:
since $S'=S+R$,
\begin{equation*}
\tfrac{S'+L}{S+L}=\tfrac{S'-S}{S+L}+1=\tfrac{R}{S+L}+1=W+1\in\bbZ,
\end{equation*}
and this together with~\eqref{eq.new ETF 2} and \eqref{eq.recursive 1} imply
\begin{equation*}
\tfrac{(K-2)(UU'-1)}{(S+L)(K-1)}
=\tfrac{(K-2)[U(U'-1)+(U-1)]}{(S+L)(K-1)}
=U\tfrac{(S'+L)}{(S+L)}\tfrac{(K-2)(U'-1)}{(S'+L)(K-1)}+\tfrac{(K-2)(U-1)}{(S+L)(K-1)}
\in\bbZ.
\end{equation*}
As such, we can combine our original ETF of type $(K,L,S)$ with our $K$-GDD of type $M^{UU'}$ via Theorem~\ref{thm.new ETF} to directly produce an ETF of type $(K,L,\tfrac{MUU'-L}{K-1})$.
\end{remark}

We also point out that,
in a manner analogous to how every McFarland difference set can be viewed as a degenerate instance of a Davis-Jedwab difference set~\cite{DavisJ97},
every Steiner ETF can be regarded as a degenerate case of the construction of Theorem~\ref{thm.new ETF}.
Here, in a manner consistent with~\eqref{eq.ETF param in terms of type param},
we regard $(D,N)=(0,1)$ as being type $(K,L,S)=(K,1,0)$ where $K\geq 1$ is arbitrary.
Under this convention, $M=S(K-1)+L=1$, \smash{$R=\frac{U-1}{K-1}$} and $W=R$, meaning we need a $K$-GDD of type $1^U$, namely a $\BIBD(V,K,1)$ where $V=U$.
When such a BIBD exists,
Theorem~\ref{thm.new ETF} suggests we let $\set{\bfphi_{1,1}}$ be some fictitious ETF for the nonexistent space $\bbF^0$,
scaled so that \smash{$\norm{\bfphi_{1,1}}^2=0$}.
It also suggests we let \smash{$\set{\bfE_v}_{v=1}^{V}=\set{\bfE_{u,m}}_{u=1,}^{U}\,_{m=1}^{1}$} be the embedding operators of the BIBD, let $\bfe_1=1$, and let $\set{1\oplus\bff_j}_{j=0}^{R}$ be the columns of a possibly-complex Hadamard matrix of size $R$.
Under these conventions,~\eqref{eq.new ETF 3} reduces to a collection of $V(R+1)$ vectors
\smash{$\set{\bfpsi_{v,j}}_{v=1,}^{V}\,_{j=0}^{R}
=\set{\bfpsi_{u,m,i,j}}_{u=1,}^{U}\,_{m=1,}^{1}\,_{i=1,}^{1}\,_{j=0}^{R}$},
where for any $v=u$ and $j$, the fact that $\bfphi_{1,1}$ lies in a ``zero-dimensional space" makes it reasonable to regard
\begin{equation*}
\bfpsi_{v,j}
=\bfpsi_{u,1,1,j}
=(\bfdelta_v\otimes\bfphi_{1,1})\oplus\bigparen{\bfE_{u,1}(\bfe_1\otimes\bff_j)}
=\bfE_{u,1}(1\otimes\bff_j)
=\bfE_{u}\bff_j.
\end{equation*}
Here,
Theorem~\ref{thm.new ETF} leads us to expect that $\set{\bfE_v\bff_j}_{v=1,}^{V}\,_{j=0}^{R}$ is an ETF of type $(K,1,S+R)=(K,1,R)$.

This is indeed the case:
as discussed in the previous section,
$\set{\bfE_v\bff_j}_{v=1,}^{V}\,_{j=0}^{R}$ is by definition a Steiner ETF for $\bbF^B$ where $B=\frac{VR}{K}$,
and moreover letting $(K,L,S)=(K,1,R)$ in~\eqref{eq.ETF param in terms of type param} gives:
\begin{align*}
\tfrac{S}{K}[S(K-1)+L]
&=\tfrac{R}{K}[R(K-1)+1]
=\tfrac{R}{K}V
=B,\\
(S+L)[S(K-1)+L]
&=(R+1)[R(K-1)+1]
=V(R+1).
\end{align*}
In particular, every Steiner ETF is a positive ETF.

As a degenerate case of Remark~\ref{rem.recursive},
we further have that when a Steiner ETF is regarded as being positive,
applying Theorem~\ref{thm.new ETF} to it yields an ETF whose parameters match those of another Steiner ETF.
Indeed, since a Steiner ETF arising from a $\BIBD(V,K,1)$ is type $(K,1,R)$ where $R=\frac{V-1}{K-1}$, we can only apply Theorem~\ref{thm.new ETF} to it whenever there exists a $K$-GDD of type $M^U$ where $M=R(K-1)+1=V$ and $U$ satisfies~\eqref{eq.new ETF 2}.
In this case, the resulting ETF is type $(K,1,\frac{VU-1}{K-1})$.
However, under these same hypotheses, we can more simply use the $\BIBD(V,K,1)$ to fill the holes of the $K$-GDD of type $V^U$ via Lemma~\ref{lem.filling holes} to obtain a $\BIBD(UV,K,1)$,
and its corresponding Steiner ETF is type $(K,1,\frac{VU-1}{K-1})$.

\section{Families of positive and negative equiangular tight frames}

In this section, we use Theorems~\ref{thm.parameter types} and~\ref{thm.new ETF} to better our understanding of positive and negative ETFs,
in particular proving the existence of the new ETFs given in Theorems~\ref{thm.new neg ETS with K=4,5} and~\ref{thm.new neg ETS with K>5}.
Recall that by~\eqref{eq.ETF param in terms of type param}, any $\ETF(D,N)$ of type $(K,L,S)$ has $D=\tfrac{S}{K}[S(K-1)+L]=S^2-\tfrac{S(S-L)}{K}$,
and so $K$ necessarily divides $S(S-L)$.
As we shall see, it is reasonable to conjecture that such ETFs exist whenever this necessary condition is satisfied.

\subsection{Positive equiangular tight frames}

In light of Definition~\ref{def.ETF types} and Theorem~\ref{thm.parameter types},
an $\ETF(D,N)$ with $1<D<N$  is positive if and only if there exists integers $K\geq 1$ and $S\geq 2$ such that
\begin{equation}
\label{eq.positive ETF param}
D=\tfrac{S}{K}[S(K-1)+1]=S^2-\tfrac{S(S-1)}{K},
\quad
N=(S+1)[S(K-1)+1],
\end{equation}
or equivalently that \smash{$S=\bigbracket{\tfrac{D(N-1)}{N-D}}^{\frac12}$} and $K=\frac{NS}{D(S+1)}$ are integers.

As discussed in the previous section, every regular simplex is a $1$-positive ETF and vice versa,
and moreover, every Steiner ETF arising from a $\BIBD(V,K,1)$ is type $(K,1,R)$ where $R=\frac{V-1}{K-1}$.
Here, the fact that $K$ necessarily divides $S(S-1)=R(R-1)$ is equivalent to having that $D=B$ is necessarily an integer.
\textit{Fisher's inequality} also states that a $\BIBD(V,K,1)$ can only exist when $K\leq R$.
These necessary conditions on the existence of a $\BIBD(V,K,1)$ are known to also be sufficient when $K=2,3,4,5$, and also asymptotically sufficient in general:
for any $K\geq 2$, there exists $V_0$ such that for all $V\geq V_0$ with the property that $R=\frac{V-1}{K-1}$ and $B=\frac{VR}{K}=\frac{V(V-1)}{K(K-1)}$,
a $\BIBD(V,K,1)$ exists~\cite{AbelG07}.
Moreover, explicit infinite families of such BIBDs are known,
including affine geometries, projective geometries, unitals and Denniston designs~\cite{FickusMT12},
and each thus gives rise to a corresponding infinite family of (positive) Steiner ETFs.

That said, not every positive ETF is a Steiner ETF.
In particular, for any prime power $Q$,
there is a phased BIBD ETF~\cite{FickusJMPW19} whose Naimark complements are $\ETF(D,N)$ where
\begin{equation*}
D=\tfrac{Q^3+1}{Q+1},
\quad
N=Q^3+1,
\quad
S=\bigparen{\tfrac{N-1}{\frac ND-1}}^{\frac12}=\bigparen{\tfrac{Q^3}{Q}}^{\frac12}=Q,
\quad
K=\tfrac{NS}{D(S+1)}=\tfrac{(Q+1)Q}{Q+1}=Q,
\end{equation*}
namely an ETF of type $(Q,1,Q)$.
These parameters match those of a Steiner ETF arising from a projective plane of order $Q-1$.
However, such an ETF can exist even when no such projective plane exists.
For example, such an ETF exists when $Q=7$ despite the fact that no projective plane of order $Q-1=6$ exists.

Other ETFs of type $(K,1,S)$ have $K>S$,
and are thus not Steiner ETFs since the underlying BIBD would necessarily violate Fisher's inequality.
To elaborate,
when $K$ is a prime power,
the requirement that $K$ divides $S(S-1)$ where $S$ and $S-1$ are relatively prime implies that either $K$ divides $S$ or $S-1$,
and in either case $K\leq S$.
However, when $K$ is not a prime power,
we can sometimes choose it to be a divisor of $S(S-1)$ that is larger than $S$.

For example, taking $K=S(S-1)$ in~\eqref{eq.positive ETF param} gives $D=S^2-1$ and $N=(S^2-1)^2=D^2$.
Such an ETF thus corresponds to a SIC-POVM in a space whose dimension is one less than a perfect square.
Such SIC-POVMs are known to exist when $S=2,\dotsc,7,18$, and are conjectured to exist for all $S$~\cite{FuchsHS17,GrasslS17}.
Similarly, taking \smash{$K=\binom{S}{2}$} in~\eqref{eq.positive ETF param} gives $D=S^2-2$ and \smash{$N=\frac12(S^2-2)(S^2-1)=\binom{D+1}{2}$}.
We refer to such $(D,N)$ as being of \textit{real maximal type} since real-valued examples of such ETFs meet the real-variable version of the Gerzon bound,
and are known to exist when $S=3,5$.
Remarkably, it is known that a real ETF of this type does not exist when $S=7$~\cite{FickusM16}.
We also caution that there is a single pair $(D,N)$ with $N=\binom{D+1}{2}$ that is neither positive or negative despite the fact that an $\ETF(D,N)$ exists, namely $(D,N)=(3,6)$.

We summarize these facts as follows:

\begin{theorem}
\label{thm.known positive ETFs}
An ETF of type $(K,1,S)$ exists whenever:
\begin{enumerate}
\renewcommand{\labelenumi}{(\alph{enumi})}
\item
$K=1$ and $S\geq 2$, (regular simplices);\smallskip
\item
$K\geq 2$ and there exists a $\BIBD(S(K-1)+1,K,1)$ (Steiner ETFs~\cite{FickusMT12}), including:\smallskip
\begin{enumerate}
\renewcommand{\labelenumii}{(\roman{enumii})}
\item
$K=2,3,4,5$ and $S\geq K$ has the property that $K$ divides $S(S-1)$;\smallskip
\item
$K=Q$ and $S=\frac{Q^J-1}{Q-1}$ where $Q$ is a prime power and $J\geq 2$ (affine geometries);
\item
$K=Q+1$ and $S=\frac{Q^J-1}{Q-1}$ where $Q$ is a prime power and $J\geq 2$ (projective geometries);\smallskip
\item
$K=Q+1$ and $S=Q^2$ where $Q$ is a prime power (unitals);\smallskip
\item
$K=2^{J_1}$ and $S=2^{J_2}+1$ where $2\leq J_1<J_2$ (Denniston designs);\smallskip
\item
$K\geq2$ and $S$ is sufficiently large and has the property that $K$ divides $S(S-1)$;\smallskip
\end{enumerate}
\item
$K=Q$ and $S=Q$ whenever $Q$ is a prime power~\cite{FickusJMPW19};\smallskip
\item
$K=S(S-1)$ where $S=2,\dotsc,7,18$ (SIC-POVMs~\cite{FuchsHS17,GrasslS17});\smallskip
\item
$K=\binom{S}{2}$ where $S=3,5$ (real maximal type~\cite{FickusM16}).
\end{enumerate}
\end{theorem}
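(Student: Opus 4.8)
The plan is to prove each item separately; none requires a new construction, only the invocation of a known existence result---for a design or directly for an ETF---followed by a parameter check using Definition~\ref{def.ETF types} and the converse direction of Theorem~\ref{thm.parameter types}. Throughout, given a candidate $\ETF(D,N)$ with $1<D<N$ one forms $S=\bigl[\tfrac{D(N-1)}{N-D}\bigr]^{\frac12}$ and $K=\tfrac{NS}{D(S+1)}$ and checks that these are the claimed integers; by Theorem~\ref{thm.parameter types} this certifies that the ETF has type $(K,1,S)$.

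For (a) I would note that a regular simplex is an $\ETF(S,S+1)$, which exists for every $S\geq1$, and that setting $(K,L,S)=(1,1,S)$ in~\eqref{eq.ETF param in terms of type param} returns $(D,N)=(S,S+1)$; the constraint $D>1$ forces $S\geq2$. For (b), the paragraphs preceding the theorem already establish that the Steiner ETF of a $\BIBD(V,K,1)$ has type $(K,1,R)$ with $R=\tfrac{V-1}{K-1}$, that is, type $(K,1,S)$ once we write $V=S(K-1)+1$; so each of (i)--(vi) reduces to a known existence statement for $\BIBD(S(K-1)+1,K,1)$. I would then cite: for (i), that the divisibility conditions~\eqref{eq.GDD necessary conditions} are also sufficient for block sizes $2,3,4,5$; for (ii) and (iii), that the point--line designs of $\mathrm{AG}(J,Q)$ and $\mathrm{PG}(J,Q)$ are $\BIBD$s with block sizes $Q$ and $Q+1$ and replication number $\tfrac{Q^J-1}{Q-1}$; for (iv), that a unital is a $2$-$(Q^3+1,Q+1,1)$ design, so $S=\tfrac{Q^3}{Q}=Q^2$; for (v), the known parameters of the Denniston design, a $2$-$(2^{J_1+J_2}+2^{J_1}-2^{J_2},\,2^{J_1},\,1)$ design, whence $S=2^{J_2}+1$; and for (vi), the asymptotic existence of such BIBDs. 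These all appear in~\cite{FickusMT12,AbelG07}.

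For (c) I would cite the phased BIBD ETF of~\cite{FickusJMPW19} whose Naimark complement is an $\ETF\bigl(\tfrac{Q^3+1}{Q+1},Q^3+1\bigr)$, and observe, exactly as in the paragraph before the theorem, that the recipe above returns $S=Q$ and $K=Q$. For (d), a SIC-POVM in $\bbF^D$ is by definition an $\ETF(D,D^2)$; applying Theorem~\ref{thm.parameter types} to $(D,N)=(S^2-1,(S^2-1)^2)$ gives $\bigl[\tfrac{D(N-1)}{N-D}\bigr]^{\frac12}=S$ and $\tfrac{NS}{D(S+1)}=S(S-1)$, both integers, so this ETF has type $(S(S-1),1,S)$, and SIC-POVMs are known in the dimensions $D=S^2-1$ for $S=2,\dotsc,7,18$ by~\cite{FuchsHS17,GrasslS17}. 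For (e), a real ETF meeting the real Gerzon bound in $\bbF^D$ is an $\ETF\bigl(D,\binom{D+1}{2}\bigr)$; applying Theorem~\ref{thm.parameter types} to $(D,N)=\bigl(S^2-2,\binom{S^2-1}{2}\bigr)$ yields $K=\binom{S}{2}$, and such real ETFs exist for $D=7,23$, i.e.\ $S=3,5$~\cite{FickusM16}.

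The one mild obstacle is purely bookkeeping: confirming that in items (iv), (v), (d) and (e) the quoted design or ETF parameters really do collapse, after substituting $S=\bigl[\tfrac{D(N-1)}{N-D}\bigr]^{\frac12}$ and $K=\tfrac{NS}{D(S+1)}$, to integers matching the claimed $(K,S)$. Each is a short algebraic identity of the same flavor as those already carried out in the proof of Theorem~\ref{thm.parameter types}, so I would present them as one-line computations rather than belabor them.
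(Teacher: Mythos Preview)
Your proposal is correct and matches the paper's own treatment essentially line for line. In the paper, Theorem~\ref{thm.known positive ETFs} is presented as a summary of the discussion immediately preceding it (together with the end of Section~3, where Steiner ETFs are shown to be type $(K,1,R)$), and that discussion carries out exactly the parameter checks you describe for each item.
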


Because so much is already known regarding the existence of positive ETFs,
we could not find any examples where Theorem~\ref{thm.new ETF} makes a verifiable contribution.
As we now discuss, much less is known about negative ETFs,
and this gives Theorem~\ref{thm.new ETF} an opportunity to be useful.

\subsection{Negative equiangular tight frames}

By Definition~\ref{def.ETF types} and Theorem~\ref{thm.parameter types},
an $\ETF(D,N)$ with $1<D<N$  is negative if and only if there exists integers $K\geq 1$ and $S\geq 2$ such that
\begin{equation}
\label{eq.negative ETF param}
D=\tfrac{S}{K}[S(K-1)-1]=S^2-\tfrac{S(S+1)}{K},
\quad
N=(S-1)[S(K-1)-1],
\end{equation}
or equivalently that \smash{$S=\bigbracket{\tfrac{D(N-1)}{N-D}}^{\frac12}$} and $K=\frac{NS}{D(S-1)}$ are integers.
Here, since $\frac{N}{D}>1$ and $\frac{S}{S-1}>1$, we actually necessarily have that $K\geq 2$.

When $K=2$, \eqref{eq.negative ETF param} becomes $(D,N)=(\frac12S(S-1),(S-1)^2)$ and such ETFs exist for any $S\geq 3$,
being the Naimark complements of ETFs of type $(2,1,S-1)$.

In the $K=3$ case, \eqref{eq.negative ETF param} becomes
\begin{equation*}
D
=\tfrac{S(2S-1)}{3}
=S^2-\tfrac{S(S+1)}{3},
\quad
N=(S-1)(2S-1).
\end{equation*}
For $D$ to be an integer, we necessarily have $S\equiv0,2\bmod 3$.
Moreover, for any $S\geq 2$ with $S\equiv 0,2\bmod 3$, an ETF of type $(3,-1,S)$ exists. Indeed, the recent paper~\cite{FickusJMP18} gives a way to modify the Steiner ETF arising from a $\BIBD(V,3,1)$ to yield a Tremain $\ETF(D,N)$ with
\begin{equation*}
D=\tfrac16(V+2)(V+3),\quad
N=\tfrac12(V+1)(V+2),
\end{equation*}
for any $V\geq 3$ with $V\equiv1,3\bmod 6$.
Such an ETF is type $(3,-1,S)$ with $S=\frac12(V+3)$.
We also note that for every $J\geq1$, there is a harmonic $\ETF(D,N)$ arising from a Spence difference set~\cite{Spence77} with $(D,N)=(\tfrac12 3^{J}(3^{J+1}+1),\tfrac12 3^{J+1}(3^{J+1}-1))$,
and such an ETF is type $(3,-1,S)$ where $S=\frac12(3^{J+1}+1)$.
It remains unclear whether any Spence ETFs are unitarily equivalent to special instances of Tremain ETFs.

In the $K=4$ case, for any positive integer $J$, Davis and Jedwab~\cite{DavisJ97} give a difference set whose harmonic ETF has parameters~\eqref{eq.Davis Jedwab parameters} and so is a type $(4,-1,S)$ ETF where $S=\frac13(2^{2J+1}+1)$.
Beyond these examples,
a few other infinite families of negative ETFs are known to exist.
In particular, in order for the expression for $D$ given in~\eqref{eq.negative ETF param} to be an integer, $K$ necessarily divides $S(S+1)$,
and so it is natural to consider the special cases where $S=K$ and $S=K-1$.

When $S=K-1$, \eqref{eq.negative ETF param} gives that ETFs of type $(K,-1,K-1)$ have
\begin{equation*}
D=(K-1)(K-2),
\quad
N=K(K-2)^2.
\end{equation*}
Remarkably, these are the same $(D,N)$ parameters as those of an ETF of type $(K-2,1,K-1)$.
In particular, every Steiner ETF arising from an affine plane of order $Q$ is both of (positive) type $(Q,1,Q+1)$ and (negative) type $(Q+2,-1,Q+1)$.
More generally, a Steiner ETF arising from a $\BIBD(V,K,1)$ has $(D,N)=(B,V(R+1))$ where $R=\frac{V-1}{K-1}$ and $B=\frac{VR}{K}$,
and so is only negative when
\begin{equation}
\label{eq.when Steiner ETF is neg}
\tfrac{NS}{D(S-1)}
=\tfrac{V(R+1)R}{B(R-1)}
=\tfrac{K(R+1)}{R-1}
\end{equation}
is an integer.
When $R$ is even, $R-1$ and $R+1$ are relatively prime,
and this can only occur when $R-1$ divides $K$.
Here, since Fisher's inequality gives $K\leq R$, this happens precisely when either $R=K=2$ or $R=K+1$,
namely when the underlying BIBD is a $\BIBD(3,2,1)$ or is an affine plane of odd order $K$, respectively.
Meanwhile, when $R$ is odd, $R-1$ and $R+1$ have exactly one prime factor in common, namely $2$, and~\eqref{eq.when Steiner ETF is neg} is an integer precisely when $\frac12(R-1)$ divides $K$.
Since $K\leq R$, this happens precisely when either $R=K=3$, $R=K+1$ or $R=2K+1$,
namely when the underlying BIBD is the projective plane of order $2$, an affine plane of even order, or is a $\BIBD(V,K,1)$ where $V=(2K+1)(K-1)+1=K(2K-1)$ for some $K\geq 2$, respectively.
With regard to the latter,
it seems to be an open question whether a $\BIBD(K(2K-1),K,1)$ exists for every $K\geq 2$,
though a Denniston design provides one whenever $K=2^J$ for some $J\geq1$,
and they are also known to exist when $K=3,5,6,7$~\cite{MathonR07}.
For such ETFs,~\eqref{eq.when Steiner ETF is neg} becomes $K+1$,
meaning they are type $(K',-1,2K'-1)$ where $K'=K+1$.

Meanwhile, when $S=K$, \eqref{eq.negative ETF param} gives that ETFs of type $(K,-1,K)$ have
\begin{equation}
\label{eq.S=K param}
D=K^2-K-1,
\quad
N=(K-1)(K^2-K-1).
\end{equation}
The recently-discovered hyperoval ETFs of~\cite{FickusMJ16} are instances of such ETFs whenever $K=2^J+1$ for some $J\geq 1$.
In the $K=4$ case,~\eqref{eq.S=K param} becomes $(D,N)=(11,33)$,
and this seems to be the smallest set of positive or negative parameters for which the existence of a corresponding ETF remains an open problem.

Apart from these examples, it seems that only a finite number of other negative ETFs are known to exist.
For example, since $K$ necessarily divides $S(S+1)$, it is natural to also consider the cases where $K=S(S+1)$ and $K=\binom{S+1}{2}$.
When $K=S(S+1)$,
\eqref{eq.negative ETF param} becomes $(D,N)=(S^2-1,(S^2-1)^2)$.
In particular, every positive SIC-POVM is also negative.
Similarly, when $K=\binom{S+1}{2}$, \eqref{eq.negative ETF param} gives $N=\binom{D+1}{2}$ where $D=S^2-2$, meaning every positive $(D,N)$ of real maximal type is also negative.

The only other example of a negative ETF that we found in the literature was an $\ETF(22,176)$, which has type $(10,-1,5)$, and arises from a particular SRG.
When searching tables of known ETFs such as~\cite{FickusM16},
it is helpful to note that most positive and negative ETFs have redundancy $\frac{N}{D}>2$,
with the only exceptions being $1$-positive ETFs (regular simplices), $2$-negative ETFs (Naimark complements of $2$-positive ETFs),
$\ETF(2,3)$ when regarded as type $(3,-1,2)$,
and $\ETF(5,10)$, which are type $(3,-1,3)$.
Indeed, when $K\geq2$, any $K$-positive ETF has $\tfrac{N}{D}=\frac{K(S+1)}{S}>K\geq2$.
Meanwhile, when $K\geq 3$, any ETF of type $(K,-1,S)$ only has $\frac{K(S-1)}{S}=\tfrac{N}{D}\leq 2$ when $S\leq\frac{K}{K-2}$.
Since $K$ necessarily divides $S(S+1)$ where $S\geq 2$,
such an ETF can only exist when $K=3$ and $S=2,3$.
We summarize these previously-known constructions of negative ETFs as follows:

\begin{theorem}
\label{thm.known negative ETFs}
An ETF of type $(K,-1,S)$ exists whenever:
\begin{enumerate}
\renewcommand{\labelenumi}{(\alph{enumi})}
\item
$K=2$ and $S\geq 3$ (Naimark complements of $2$-positive ETFs);\smallskip
\item
$K=3$ and $S\geq 2$ with $S\equiv 0,2\bmod 3$ (Tremain ETFs~\cite{FickusJMP18} and Spence harmonic ETFs~\cite{Spence77});\smallskip
\item
$K=4$ and $S=\frac13(2^{2J+1}+1)$ for some $J\geq1$ (Davis-Jedwab harmonic ETFs~\cite{DavisJ97});\smallskip
\item
$K=Q+2$ and $S=K-1$ where $Q$ is a prime power (Steiner ETFs from affine planes~\cite{FickusMT12});\smallskip
\item
$K=2^J+1$ and $S=2K-1$ where $J\geq 1$ (Steiner ETFs from Denniston designs~\cite{FickusMT12});\smallskip
\item
$K=2^J+1$ and $S=K$ where $J\geq 1$ (hyperoval ETFs~\cite{FickusMJ16});\smallskip
\item
$K=S(S+1)$ where $S=2,\dotsc,7,18$ (SIC-POVMs~\cite{FuchsHS17});\smallskip
\item
$K=\binom{S+1}{2}$ where $S=3,5$ (real maximal type~\cite{FickusM16});\smallskip
\item
$(K,S)=(4,7),(6,11),(7,13),(8,15),(10,5)$ (various other ETFs~\cite{FickusM16}).
\end{enumerate}
\end{theorem}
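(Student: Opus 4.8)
The plan is to establish each of the nine items separately: for each, exhibit a known ETF and check, via Theorem~\ref{thm.parameter types}, that its parameters $(D,N)$ are of type $(K,-1,S)$, i.e.\ that $S=\bigbracket{\tfrac{D(N-1)}{N-D}}^{\frac12}$ and $K=\tfrac{NS}{D(S-1)}$ are the stated integers. Essentially all of the needed parameter computations already appear in the discussion preceding the theorem, so the proof is chiefly a matter of assembling those observations and checking that each family covers exactly the claimed range of $S$.

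I would take the items in order. For (a), a type $(2,-1,S)$ ETF is obtained as the Naimark complement of a $2$-positive Steiner ETF (e.g.\ the Steiner ETF of a complete graph, which exists unconditionally), and~\eqref{eq.norms of pos or neg ETFs} or a short direct computation shows that such complements realize type $(2,-1,S)$ for every $S\ge3$. For (b), the Tremain construction of~\cite{FickusJMP18} turns a Steiner triple system $\BIBD(V,3,1)$---which exists precisely for $V\equiv1,3\bmod6$---into an ETF of type $(3,-1,S)$ with $S=\tfrac12(V+3)$; as $V$ ranges over the admissible values, $S$ runs through exactly the integers $\ge3$ with $S\equiv0,2\bmod3$, and the remaining value $S=2$ is realized by the regular simplex $\ETF(2,3)$ regarded as type $(3,-1,2)$. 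The Spence harmonic ETFs give a subfamily here and need only be cited. For (c), the Davis--Jedwab difference set~\cite{DavisJ97} yields a harmonic ETF with parameters~\eqref{eq.Davis Jedwab parameters}, which is type $(4,-1,\tfrac13(2^{2J+1}+1))$ as already noted. For (d) and (e), one combines a Steiner ETF of a $\BIBD(V,K_0,1)$ with the computation in~\eqref{eq.when Steiner ETF is neg}: an affine plane of prime-power order $Q$, a $\BIBD(Q^2,Q,1)$, gives a Steiner ETF of type $(Q+2,-1,Q+1)$, while the Denniston $\BIBD(2^J(2^{J+1}-1),2^J,1)$ gives one of type $(2^J+1,-1,2^{J+1}+1)$. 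For (f), the hyperoval ETFs of~\cite{FickusMJ16} realize type $(K,-1,K)$ for $K=2^J+1$, with $(D,N)$ as in~\eqref{eq.S=K param}. For (g) and (h), a SIC-POVM $\ETF(S^2-1,(S^2-1)^2)$ is of type $(S(S+1),-1,S)$ and a real-maximal-type $\ETF(S^2-2,\tfrac12(S^2-1)(S^2-2))$ is of type $(\binom{S+1}{2},-1,S)$, as computed above; these exist for $S\in\{2,\dotsc,7,18\}$ and $S\in\{3,5\}$ by Theorem~\ref{thm.known positive ETFs}(d),(e). Finally, for (i), each of the five listed pairs corresponds to an individual ETF recorded in the survey~\cite{FickusM16} (for instance $(K,S)=(10,5)$ is the $\ETF(22,176)$ coming from a strongly regular graph), and a direct substitution into~\eqref{eq.negative ETF param} confirms the asserted type in each case.

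Since the theorem is a compilation, there is no genuine mathematical obstacle; the only real care is in the bookkeeping---matching parameters correctly and confirming that each family covers exactly the stated $S$ rather than a proper sub-range. The two spots that demand the most attention are (b), where one must check that the Tremain family together with the degenerate $\ETF(2,3)$ accounts for all $S\ge2$ with $S\equiv0,2\bmod3$, and (d)--(e), where the substantive point is to recognize that certain Steiner ETFs are in fact \emph{negative}---which hinges on the divisibility analysis of $\tfrac{K(R+1)}{R-1}$ carried out immediately before the theorem.
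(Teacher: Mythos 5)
Your proposal is correct and follows essentially the same route as the paper, which likewise treats the theorem as a compilation: it verifies via Theorem~\ref{thm.parameter types} (and the divisibility analysis of $\tfrac{K(R+1)}{R-1}$ in~\eqref{eq.when Steiner ETF is neg}) that Naimark complements of $2$-positive ETFs, Tremain/Spence ETFs, Davis--Jedwab harmonic ETFs, Steiner ETFs from affine planes and Denniston designs, hyperoval ETFs, SIC-POVMs, real-maximal ETFs, and the sporadic examples of~\cite{FickusM16} realize exactly the listed types. The only quibble is the boundary value $S=3$ in (a), where the relevant Naimark complement is the $\ETF(1,4)$ (not a $2$-positive ETF under Definition~\ref{def.ETF types}), so that case should instead be settled by noting the type $(2,-1,3)$ parameters $(D,N)=(3,4)$ belong to a regular simplex---a slip the paper's own parenthetical shares.
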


From this list, we see that for any $K\geq 5$,
the existing literature provides at most a finite number of $K$-negative ETFs.
Theorem~\ref{thm.new ETF}(a) implies that many more negative ETFs exist:
if an ETF of type $(K,-1,S)$ exists, then an ETF of type $(K,-1,\frac{MU+1}{K-1})$ exists for all sufficiently large $U$ that satisfy~\eqref{eq.new ETF 1} and~\eqref{eq.new ETF 2}.
Combining this fact with Theorem~\ref{thm.known negative ETFs} immediately gives:

\begin{corollary}
There exists an infinite number of $K$-negative ETFs whenever:
\begin{enumerate}
\renewcommand{\labelenumi}{(\alph{enumi})}
\item
$K=Q+2$ where $Q$ is a prime power;\smallskip
\item
$K=Q+1$ where $Q$ is an even prime power;\smallskip
\item
$K=2,8,12,20,30,42,56,342$.
\end{enumerate}
\end{corollary}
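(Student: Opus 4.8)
The plan is to feed each seed ETF catalogued in Theorem~\ref{thm.known negative ETFs} into the amplification supplied by Theorem~\ref{thm.new ETF}(a). Recall that the latter says: given any ETF of type $(K,-1,S)$ with $K\geq2$, and writing $M=S(K-1)-1$, every sufficiently large $U$ satisfying~\eqref{eq.new ETF 1} and~\eqref{eq.new ETF 2} produces an ETF of type $(K,-1,S')$ with $S'=\tfrac{MU+1}{K-1}$; moreover infinitely many such $U$ exist, for instance all $U\equiv1\bmod(S-1)K(K-1)$. Since $S'$ is strictly increasing in $U$, a single seed of type $(K,-1,S)$ therefore spawns infinitely many pairwise distinct $K$-negative ETFs. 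Hence it suffices to exhibit, for each $K$ listed in (a), (b), (c), at least one seed of type $(K,-1,S)$ with $S\geq2$.

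For case (a), if $K=Q+2$ for a prime power $Q$, then Theorem~\ref{thm.known negative ETFs}(d) provides an ETF of type $(K,-1,K-1)$ (the Naimark complement of the Steiner ETF from an affine plane of order $Q$), which serves as the seed. For case (b), if $K=Q+1$ for an even prime power $Q$, then $K=2^J+1$ for some $J\geq1$, and Theorem~\ref{thm.known negative ETFs}(f) provides the hyperoval ETF of type $(K,-1,K)$ as the seed (alternatively, part~(e) of that theorem gives one of type $(K,-1,2K-1)$).

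For case (c): when $K=2$ a seed of type $(2,-1,3)$ is given by Theorem~\ref{thm.known negative ETFs}(a); when $K=8$ the sporadic seed of type $(8,-1,15)$ is given by Theorem~\ref{thm.known negative ETFs}(i); and for each $K\in\{12,20,30,42,56,342\}$ one observes that $K=S(S+1)$ with $S\in\{3,4,5,6,7,18\}$ respectively, since $12=3\cdot4$, $20=4\cdot5$, $30=5\cdot6$, $42=6\cdot7$, $56=7\cdot8$ and $342=18\cdot19$, so Theorem~\ref{thm.known negative ETFs}(g) provides a seed of type $(K,-1,S)$ (the negative reading of a SIC-POVM in dimension $S^2-1$). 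With a seed in hand for every listed $K$, Theorem~\ref{thm.new ETF}(a) finishes the argument exactly as in the first paragraph.

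I do not expect a genuine obstacle here: all the constructive content lives in Theorem~\ref{thm.new ETF}, and Remark~\ref{rem.recursive} shows there is nothing to gain by iterating. The only thing requiring care is the bookkeeping above—in particular making sure the non-obvious values $K=8$ (from the sporadic list) and $K=12,20,30,42,56,342$ (from small SIC-POVMs via $K=S(S+1)$) each do admit a known negative seed, so that the amplification machine can be switched on for them.
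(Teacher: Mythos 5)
Your proposal is correct and follows essentially the same route as the paper, which obtains the corollary by feeding the seeds catalogued in Theorem~\ref{thm.known negative ETFs} (affine-plane Steiner ETFs for (a), Denniston/hyperoval ETFs for (b), and the $K=2$ family, the sporadic $(8,-1,15)$, and the SIC-POVM types $K=S(S+1)$ for (c)) into the asymptotic amplification of Theorem~\ref{thm.new ETF}(a). Your bookkeeping of seeds and of the congruence class $U\equiv 1\bmod (S-1)K(K-1)$ matches the paper's argument, so there is nothing to correct.
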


In particular, we now know that there are an infinite number of values of $K$ for which an infinite number of $K$-negative ETFs exist,
with $K=14$ being the smallest open case.
With these asymptotic existence results in hand,
we now focus on applying Theorem~\ref{thm.new ETF} with explicit GDDs.

For example, the ``Mercedes-Benz" regular simplex $\ETF(2,3)$ is type $(K,L,S)=(3,-1,2)$ and so has $M=S(K-1)+L=3$.
Since $S+L=1$ divides $K-2=1$,
Theorem~\ref{thm.new ETF} can be applied with any $3$-GDD of type $3^U$ so as to produce an ETF of type
$(K,L,\tfrac{MU-L}{K-1})=(3,-1,\tfrac12(3U+1))$,
and moreover that the necessary conditions~\eqref{eq.GDD necessary conditions} on the existence of such a GDD reduce to~\eqref{eq.new ETF 1}, namely to having $U\geq 3$, $\tfrac12(U-1)\in\bbZ$ and $\tfrac12U(U-1)\in\bbZ$.
In fact, such GDDs are known to exist whenever these necessary conditions are satisfied~\cite{Ge07},
namely when $U\geq 3$ is odd.
(This also follows from the fact that such GDDs are equivalent to the incidence structures obtained by removing a parallel class from a resolvable Steiner triple system.)
That is, writing $U=2J+1$ for some $J\geq 1$, we can apply Theorem~\ref{thm.new ETF} with an $\ETF(2,3)$ and a known $3$-GDD of type $3^{2J+1}$ to produce an ETF of type $(3,-1,\tfrac12(3U+1))=(3,-1,3J+2)$.

In summary, applying Theorem~\ref{thm.new ETF} to an ETF of type $(3,-1,2)$ produces ETFs of type $(3,-1,S)$ for any $S\equiv 2\bmod 3$,
and so recovers the parameters of ``half" of all possible $3$-negative ETFs,
cf.\ Theorem~\ref{thm.known negative ETFs} and~\cite{FickusJMP18},
including the parameters of all harmonic ETFs arising from Spence difference sets.
To instead recover some of the ETFs of type $(3,-1,S)$ with $S\equiv0\bmod 3$,
one may, for example, apply Theorem~\ref{thm.new ETF} to the well-known $\ETF(5,10)$, which is type $(3,-1,3)$.

In order to obtain ETFs with verifiably new parameters,
we turn our attention to applying Theorem~\ref{thm.new ETF} to known $K$-negative ETFs with $K\geq 4$.
Here, the limiting factor seems to be a lack of knowledge regarding uniform $K$-GDDs:
while the literature has much to say when~$K=3,4,5$~\cite{Ge07},
we are relegated to well-known simple constructions involving Lemmas~\ref{lem.Wilson} and~\ref{lem.filling holes} whenever $K>5$.
As such, we consider the $K=4,5$ cases separately from those with $K>5$:

\begin{proof}[Proof of Theorem~\ref{thm.new neg ETS with K=4,5}]
The $\ETF(6,16)$ is type $(4,-1,3)$,
and so we can apply Theorem~\ref{thm.new ETF} whenever there exists a $4$-GDD of type $8^U$ where $U$ satisfies~\eqref{eq.new ETF 2}.
By Theorem~\ref{thm.new ETF}, the known necessary conditions~\eqref{eq.GDD necessary conditions} on the existence of such GDDs reduce to~\eqref{eq.new ETF 1}:
\begin{equation*}
U\geq 4,
\quad
\tfrac{U-1}{3}\in\bbZ,
\quad
\tfrac{U(U-1)}{3}=\tfrac{4U(U-1)}{4(3)}\in\bbZ,
\end{equation*}
namely to having $U\geq 4$ and $U\equiv 1\bmod 3$.
These necessary conditions on the existence of $4$-GDDs of type $8^U$ are known to be sufficient~\cite{Ge07}.
Moreover, for any such $U$, we have~\eqref{eq.new ETF 2} is automatically satisfied since
\smash{$\tfrac{U-1}{3}=\frac{2(U-1)}{2(3)}\in\bbZ$}.
Altogether, for any $U\geq 4$ with $U\equiv 1\bmod 3$, we can apply Theorem~\ref{thm.new ETF} to the ETF of type $(4,-1,3)$ with a $4$-GDD of type $8^U$,
and doing so produces an ETF of type $(4,-1,\frac13(8U+1))$.
Here, letting $U=1$ recovers the parameters of the original ETF.
Overall, writing $U=3J+1$ for some $J\geq0$,
this means that an ETF of type $(4,-1,S)$ exists whenever $S=\frac13(8U+1)=\frac13[8(3J+1)+1]=8J+3$ for any $J\geq0$,
namely whenever $S\equiv 3\bmod 8$.
In particular, for any $J\geq 1$ we can take $U=4^{J-1}$ to obtain an ETF of type $(4,-1,S)$ where $S=\tfrac13(8U+1)=\tfrac13(2^{2J-1}+1)$.
This means that applying Theorem~\ref{thm.new ETF} to the ETF of type $(4,-1,3)$ recovers the parameters~\eqref{eq.Davis Jedwab parameters} of harmonic ETFs corresponding to Davis-Jedwab difference sets.

In light of Remark~\ref{rem.recursive},
applying Theorem~\ref{thm.new ETF} to any ETF of type $(4,-1,S)$ with $S\equiv 3\bmod 8$ simply recovers a subset of the ETF types obtained by applying Theorem~\ref{thm.new ETF} to the ETF of type $(4,-1,3)$.
As such, to obtain more $4$-negative ETFs via Theorem~\ref{thm.new ETF},
we need to apply it to initial ETFs that lie outside of this family.
By Theorem~\ref{thm.known negative ETFs}, the existing literature gives one such set of parameters, namely ETFs of type $(4,-1,7)$, which have $(D,N)=(35,120)$.
Here, $M=20$, and a $4$-GDD of type $20^U$ can only exist if $U$ satisfies~\eqref{eq.new ETF 1}:
\begin{equation*}
U\geq 4,
\quad
\tfrac{U-1}{3}\in\bbZ,
\quad
\tfrac{U(U-1)}{3}=\tfrac{4U(U-1)}{4(3)}\in\bbZ.
\end{equation*}
Moreover, these necessary conditions are known to be sufficient~\cite{Ge07},
meaning Theorem~\ref{thm.new ETF} can be applied whenever $U$ also satisfies~\eqref{eq.new ETF 2},
namely \smash{$\tfrac{U-1}{9}=\tfrac{2(U-1)}{6(3)}\in\bbZ$}.
Thus, for any $U\equiv 1\bmod 9$, Theorem~\ref{thm.new ETF} yields an ETF of type $(4,-1,\frac13(20U+1))$.
In particular, an ETF of type $(4,-1,S)$ exists for any $S\equiv 7\bmod 60$.
ETFs of type $(4,-1,S)$ with $S\equiv 67\bmod 120$ thus arise from both constructions;
in the statement of Theorem~\ref{thm.new neg ETS with K=4,5},
we elect to not remove the overlapping values of $S$ from either family so as to not emphasize one family over the other, and possibly make it easier for future researchers to identify potential patterns.

In a similar manner, as summarized in Theorem~\ref{thm.known negative ETFs},
the existing literature provides ETFs of type $(5,-1,S)$ for exactly three values of $S$,
namely $\ETF(12,45)$, $\ETF(19,76)$ and $\ETF(63,280)$ which have $S=4,5,9$ and so
$M=15,19,35$, respectively.
For these particular values of $M$, the corresponding necessary conditions~\eqref{eq.GDD necessary conditions} on the existence of $5$-GDDs of type $M^U$ are known to be sufficient~\cite{Ge07},
meaning we only need $U$ to satisfy~\eqref{eq.new ETF 1} and~\eqref{eq.new ETF 2} for the corresponding value of $S$, namely to satisfy $U\geq 5$, $\frac14(U-1)\in\bbZ$ and that
\begin{align*}
\tfrac{5U(U-1)}{5(4)}\in\bbZ,
\quad
\tfrac{3(U-1)}{3(4)}\in\bbZ,&\text{ when }S=4,\\
\tfrac{6U(U-1)}{5(4)}\in\bbZ,
\quad
\tfrac{3(U-1)}{4(4)}\in\bbZ,&\text{ when }S=5,\\
\tfrac{10U(U-1)}{5(4)}\in\bbZ,
\quad
\tfrac{3(U-1)}{8(4)}\in\bbZ,&\text{ when }S=9.
\end{align*}
An ETF of type $(5,-1,S)$ thus exists when $S=\frac14(15U+1)$ with $U\equiv 1\bmod 4$, or $S=\frac14(19U+1)$ with $U\equiv 1,65\bmod 80$, or $S=\frac14(35U+1)$ with $U\equiv 1\bmod 32$.
That is, an ETF of type $(5,-1,S)$ exists when $S\equiv 4\bmod 15$,
or $S\equiv 5,309\bmod 380$, or $S\equiv 9\bmod 280$.
\end{proof}

In certain special cases, these techniques yield real ETFs:

\begin{proof}[Proof of Theorem~\ref{thm.new real ETFs}]
The ETF~\eqref{eq.new ETF 3} constructed in Theorem~\ref{thm.new ETF} is clearly real when the initial $\ETF(D,N)$ $\set{\bfphi_n}_{n=1}^{N}$ and the Hadamard matrices of size $S+L$ and $W+1$ are real.

In particular, a real $\ETF(63,280)$ exists~\cite{FickusM16},
and such ETFs are type $(5,-1,9)$.
Since a real Hadamard matrix of size $S+L=8$ exists,
then letting $M=S(K-1)+L=35$,
Theorem~\ref{thm.new ETF} yields a real ETF of type $(5,-1,\frac14(35U+1))$
whenever there exists a $5$-GDD of type $35^U$ where $U$ satisfies~\eqref{eq.new ETF 2} and there exists a real Hadamard matrix of size
\begin{equation*}
H
=W+1
=\tfrac{R}{S+L}+1
=\tfrac{M(U-1)}{(S+L)(K-1)}+1
=\tfrac{35(U-1)}{8(4)}+1
=\tfrac{35U-3}{32}.
\end{equation*}
Here, we recall from the proof of Theorem~\ref{thm.new neg ETS with K=4,5} that such GDDs exist whenever $U\equiv1\bmod 32$,
namely whenever $H\equiv1\bmod 35$.
Altogether, since $\frac14(35U+1)=\frac14[35(\frac{32H+3}{35})+1]=8H+1$,
Theorem~\ref{thm.new ETF} yields a real ETF of type $(5,-1,8H+1)$ whenever there exists a Hadamard matrix of size $H$ when $H\equiv1\bmod 35$.
An infinite number of such Hadamard matrices exist:
since $17$ is relatively prime to $140$,
Dirichlet's theorem implies an infinite number of primes $Q\equiv 17\bmod 140$ exist,
and each has the property that $Q\equiv 1\bmod 4$,
meaning that Paley's construction yields a real Hadamard matrix of size
$2(Q+1)\equiv 36\bmod 280$, in particular of size $2(Q+1)\equiv 1\bmod 35$.

Similarly, a real $\ETF(7,28)$ exists~\cite{FickusM16},
is type $(6,-1,3)$, and there exists a real Hadamard matrix of size $S+L=2$.
Letting $M=14$,
Theorem~\ref{thm.new ETF} thus yields a real ETF of type $(6,-1,\frac15(14U+1))$ whenever there exists a $6$-GDD of type $14^U$ where $U$ satisfies~\eqref{eq.new ETF 2} and there exists a real Hadamard matrix of size $H=W+1=\frac15(7U-2)$.
Here, the necessary conditions~\eqref{eq.new ETF 1} and~\eqref{eq.new ETF 2} on the existence of such a GDD reduce to having $U\equiv 1,6\bmod 15$,
namely to having $H\equiv 1,8\bmod 21$.
Since $K=6$, these necessary conditions are not known to be sufficient.
Nevertheless, they are asymptotically sufficient:
there exists $U_0$ such that for all $U\geq U_0$ with $U\equiv 1,6\bmod 15$,
there exists a $6$-GDD of type $14^U$ where $U$ satisfies~\eqref{eq.new ETF 2}.
As such, there exists $H_0$ such that for all $H\geq H_0$ with $H\equiv 1,8\bmod 21$,
if there exists a real Hadamard matrix of size $H$, then there exists a real ETF of type $(6,-1,\tfrac15(14U+1))=(6,-1,2H+1)$.
As above, an infinite number of such Hadamard matrices exist:
since $\gcd(73,84)=1$, there are an infinite number of primes $Q\equiv 73\bmod 84$,
and each has the property that $Q\equiv 1\bmod 4$,
meaning Paley's construction yields a real Hadamard matrix of size $2(Q+1)\equiv 148\bmod 168$,
in particular of size $2(Q+1)\equiv 1\bmod 21$.

Applying these same techniques to real $\ETF(22,176)$ and $\ETF(23,276)$ yields the infinite families stated in (c) and (d) of the result.
\end{proof}

We have now seen that the construction of Theorem~\ref{thm.new ETF} recovers all Steiner ETFs as a degenerate case, recovers the parameters of ``half" of all Tremain ETFs including those of all harmonic ETFs arising from Spence difference sets,
and also recovers the parameters~\eqref{eq.Davis Jedwab parameters} of harmonic ETFs arising from the Davis-Jedwab difference sets.
This is analogous to how the approach of~\cite{DavisJ97} unifies McFarland, Spence and Davis-Jedwab difference sets with those with parameters~\eqref{eq.Davis Jedwab parameters}.
From this perspective, the value of the generalization of~\cite{DavisJ97} given in Theorem~\ref{thm.new ETF} is that it permits weaker conclusions to be drawn from weaker assumptions:
while~\cite{DavisJ97} forms new difference sets (i.e., new harmonic ETFs) by combining given difference sets with building sets formed from a collection of hyperplanes,
Theorem~\ref{thm.new ETF} forms new ETFs by combining given ETFs with GDDs.

In fact, a careful read of~\cite{DavisJ97} indicates that the building sets used there to produce difference sets with parameters~\eqref{eq.Davis Jedwab parameters} are related to $4$-GDDs of type \smash{$8^{4^{J-1}}$} obtained by recursively using $\TD(4,2^{2j+1})$ to fill the holes of $\TD(4,2^{2j+3})$ for every $j=1,\dotsc,J-1$ via Lemma~\ref{lem.filling holes}.
Alternatively, such GDDs can be constructed by using Lemma~\ref{lem.Wilson} to combine a $\TD(4,8)$ with a $\BIBD(4^{J-1},4,1)$ arising from an affine geometry.
We now generalize these approaches, using Lemmas~\ref{lem.Wilson} and~\ref{lem.filling holes} to produce the GDDs needed to apply Theorem~\ref{thm.new ETF} to several known $K$-negative ETFs with $K>5$:

\begin{theorem}
\label{thm.new neg ETS with K>5}
If an ETF of type $(K,-1,S)$ exists where $\frac{K-2}{S-1}\in\bbZ$,
and a $\TD(K,M)$ exists where $M=S(K-1)-1$,
then an ETF of type $(K,-1,\tfrac{MU+1}{K-1})$ exists when either:
\begin{enumerate}
\renewcommand{\labelenumi}{(\roman{enumi})}
\item $U=1$ or $U=K$;\smallskip
\item
a $\BIBD(U,K,1)$ exists;\smallskip
\item
$U=K^J$ for some $J\geq2$, provided a $\TD(K,MK^j)$ exists for all $j=1,\dotsc,J-1$.
\end{enumerate}
As a consequence, an ETF of type $(K,-1,S)$ exists when either:
\begin{enumerate}
\renewcommand{\labelenumi}{(\alph{enumi})}
\item
$K=6$, $S=\frac15(9U+1)$ where either $U=6^J$ for some $J\geq 0$ or a $\BIBD(U,6,1)$ exists;\smallskip
\item
$K=6$, $S=\frac15(24U+1)$ where either $U=6^J$ for some $J\geq 0$ or a $\BIBD(U,6,1)$ exists;\smallskip
\item
$K=7$, $S=\tfrac1{6}(35U+1)$ where either $U=7^J$ for some $J\geq 0$ or a $\BIBD(U,7,1)$ exists;\smallskip
\item
$K=10$, $S=\tfrac1{9}(80U+1)$ where either $U=10^J$ for some $J\geq 0$ or a $\BIBD(U,10,1)$ exists;\smallskip
\item
$K=12$, $S=\frac1{11}(32U+1)$ when either $U=12^J$ for some $J\geq 0$ or a $\BIBD(U,12,1)$ exists.
\end{enumerate}
\end{theorem}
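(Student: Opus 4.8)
The plan is to obtain every one of these ETFs from a single application of Theorem~\ref{thm.new ETF} with $L=-1$; all the work lies in manufacturing the group divisible design it needs. Since $L=-1$, the ``$M$'' that Theorem~\ref{thm.new ETF} assigns to an ETF of type $(K,-1,S)$ is $M=S(K-1)+L=S(K-1)-1$, which is exactly the order of the hypothesized transversal design, so the task is to produce a $K$-GDD of type $M^U$ for the relevant values of $U$. Case~(i) is immediate: when $U=K$ such a GDD is the given $\TD(K,M)$ itself, and when $U=1$ there is nothing to do, since $\tfrac{MU+1}{K-1}=\tfrac{M+1}{K-1}=S$ and Theorem~\ref{thm.new ETF} then returns the original ETF. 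For case~(ii), I would apply Lemma~\ref{lem.Wilson} with its first design taken to be the given $\TD(K,M)$, a $K$-GDD of type $M^K$, and its second design the $\BIBD(U,K,1)$, a $K$-GDD (hence a ``$U$''-GDD in the lemma's notation) of type $1^U$; the output is a $K$-GDD of type $M^U$. For case~(iii), I would start from the $\TD(K,M)$, a $K$-GDD of type $M^{K}$, and iterate Lemma~\ref{lem.filling holes}: given a $K$-GDD of type $M^{K^j}$ together with the hypothesized $\TD(K,MK^j)$, a $K$-GDD of type $(MK^j)^K$, Lemma~\ref{lem.filling holes} yields a $K$-GDD of type $M^{K^{j+1}}$, and repeating this for $j=1,\dots,J-1$ produces a $K$-GDD of type $M^{K^J}$ while consuming precisely the transversal designs named in the proviso.

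Once a $K$-GDD of type $M^U$ is in hand, Theorem~\ref{thm.new ETF} applies as soon as $U$ also satisfies~\eqref{eq.new ETF 2}, namely $\tfrac{(K-2)(U-1)}{(S-1)(K-1)}\in\bbZ$; and this is automatic, because the existence of any $K$-GDD of type $M^U$ forces the necessary conditions~\eqref{eq.new ETF 1}, in particular $\tfrac{U-1}{K-1}\in\bbZ$, so that $\tfrac{(K-2)(U-1)}{(S-1)(K-1)}=\tfrac{K-2}{S-1}\cdot\tfrac{U-1}{K-1}$ is a product of integers under the standing hypothesis $\tfrac{K-2}{S-1}\in\bbZ$. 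Theorem~\ref{thm.new ETF} then produces an ETF of type $(K,-1,S')$ with $S'=\tfrac{MU-L}{K-1}=\tfrac{MU+1}{K-1}$, which is the first assertion.

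For the five corollaries I would feed this construction the following initial $K$-negative ETFs, all recorded in Theorem~\ref{thm.known negative ETFs}. For (a) and (e), the SIC-POVMs in $\bbC^3$ and $\bbC^8$, which are the $\ETF(3,9)$ of type $(6,-1,2)$ and the $\ETF(8,64)$ of type $(12,-1,3)$, with $\tfrac{K-2}{S-1}$ equal to $4$ and $5$, respectively. For (b), (c) and (d), the Steiner ETFs arising from the affine planes of orders $4$, $5$ and $8$, which are the $\ETF(20,96)$, $\ETF(30,175)$ and $\ETF(72,640)$ of types $(6,-1,5)$, $(7,-1,6)$ and $(10,-1,9)$ (cf.\ Theorem~\ref{thm.known negative ETFs}(d) with $Q=4,5,8$), with $\tfrac{K-2}{S-1}=1$ in each case. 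In every instance the associated $M=S(K-1)-1$ equals the coefficient of $U$ in the stated output formula, that is, $9,24,35,80,32$ respectively, so it only remains to invoke parts (i)--(iii) with the transversal designs they need: a $\TD(K,M)$ in every case, and the tower $\TD(K,MK^j)$ whenever the $U=K^J$ branch is used with $K$ not a prime power; for $K=7$ that branch can instead be supplied by the affine geometry $\BIBD(7^J,7,1)$, so part~(ii) suffices there. Each such $\TD(K,M')$ amounts to a set of $K-2$ mutually orthogonal Latin squares of order $M'$, and I would close the argument by citing the known existence of these, the base orders $9$ and $32$ being prime powers and the remaining base orders and the entire towers $MK^j$ admitting enough MOLS by standard product and recursive constructions together with tables.

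I expect the one genuine obstacle to be exactly this last step: certifying that every transversal design invoked actually exists. The small base cases $\TD(6,9),\TD(6,24),\TD(7,35),\TD(10,80),\TD(12,32)$ must be read off MOLS tables, and the infinite towers $\TD(K,MK^j)$ must be handled uniformly in $j$, where a bare MacNeish product bound can fall short for the smallest few orders (for instance $54$ when $K=6$, or $384$ when $K=12$); one must therefore lean on the Wilson-type filling-in and product constructions already recorded as Lemmas~\ref{lem.Wilson} and~\ref{lem.filling holes}, and their classical MOLS analogues, to raise the count of mutually orthogonal squares to $K-2$. Everything else is the routine verification, via~\eqref{eq.ETF param in terms of type param} and Theorem~\ref{thm.parameter types}, that the quoted $(D,N)$ pairs are of the asserted types.
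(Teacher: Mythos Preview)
Your proposal is correct and follows essentially the same route as the paper: build a $K$-GDD of type $M^U$ either directly from the $\TD(K,M)$, via Lemma~\ref{lem.Wilson} with a $\BIBD(U,K,1)$, or by iterated hole-filling (Lemma~\ref{lem.filling holes}) up the tower of transversal designs, then invoke Theorem~\ref{thm.new ETF}(b) using $S-1\mid K-2$; the five corollaries are seeded with exactly the same initial ETFs (the SIC-POVMs for $S=2,3$ and the affine-plane Steiner ETFs for $Q=4,5,8$) and closed by the same MOLS verifications. Your recursive description of case~(iii) is in fact cleaner than the paper's phrasing, and you correctly flag that the small MOLS orders such as $54$ and $384$ require table look-ups rather than the bare MacNeish bound, just as the paper does.
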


\begin{proof}
Since \smash{$\frac{K-2}{S-1}\in\bbZ$} and $M=S(K-1)-1$,
any $K$-GDD of type $M^U$ can be combined with the given ETF of type $(K,-1,S)$ via Theorem~\ref{thm.new ETF} in order to construct an ETF of type \smash{$(K,-1,\frac{MU+1}{K-1})$}.
(Since $\frac{M+1}{K-1}=S$, such an ETF also exists when $U=1$, namely the given initial ETF.)
For instance, the given $\TD(K,M)$ is a $K$-GDD of type $M^K$, and so such an ETF exists when $U=K$.
Other examples of such GDDs can be constructed by combining the given $\TD(K,M)$ with any $\BIBD(U,K,1)$---a $K$-GDD of type $1^U$---via Lemma~\ref{lem.Wilson}.
In particular, when $K$ is a prime power, we can construct these BIBDs from affine geometries of order $K$ to produce examples of such GDDs with $U=K^J$ for any $J\geq 2$.
GDDs with these parameters also sometimes exist even when $K$ is not a prime power:
if a $\TD(K,MK^j)$ exists for all $j=1,\dotsc,J-1$,
then recursively using $\TD(K,MK^{j-1})$ to fill the holes of $\TD(K,MK^j)$ for all $j=1,\dotsc,J-1$ via Lemma~\ref{lem.Wilson} gives a $K$-GDD of type \smash{$M^{K^J}$}.

We now apply these ideas to some known $K$-negative ETFs with $K>5$,
organized according to the families of Theorem~\ref{thm.known negative ETFs}.
In particular, ETFs of type $(Q+2,-1,Q+1)$ exist whenever $Q$ is a prime power,
and $S-1=Q$ divides $K-2=Q$.
For such ETFs, $M=Q(Q+2)$ and a $\TD(Q+2,Q(Q+2))$ equates to $Q$ MOLS of size $Q(Q+2)$,
which is known to occur when $Q=2,3,4,5,8$~\cite{AbelCD07}.
(When $Q$ and $Q+2$ are both prime powers,
the standard method~\cite{MacNeish22} only produces $Q-1$ MOLS of size $Q(Q+2)$.)
Taking $Q=2,3$ recovers a subset of the ETFs produced in Theorem~\ref{thm.new neg ETS with K=4,5},
and so we focus on $Q=4,5,8$.
In particular,
for these values of $Q$,
the above methods yield an ETF of type \smash{$(Q+2,-1,\tfrac1{Q+1}[Q(Q+2)U+1])$}
when either $U=1$, $U=Q+2$, or a $\BIBD(U,Q+2,1)$ exists.
Moreover, even when $Q+2$ is not a prime power,
such an ETF with $U=(Q+2)^J$ exists,
provided a $\TD(Q+2,Q(Q+2)^{j+1})$ exists for all $j=1,\dotsc,J-1$.
When $Q=4,8$, this occurs for any $J\geq 2$,
since for any $j\geq 1$, the number of MOLS of size $(4)6^{j+1}=2^{j+3}3^{j+1}$ is at least $\min\set{2^{j+3},3^{j+1}}-1\geq 7$,
while the number of MOLS of size $(8)10^{j+1}=2^{j+4}5^{j+1}$ is at least $\min\set{2^{j+4},5^{j+1}}-1\geq 23$.

Other new explicit infinite families of negative ETFs arise from SIC-POVMs, which by Theorem~\ref{thm.known negative ETFs} are ETFs of type $(K,-1,S)$ where $K=S(S+1)$.
Such ETFs always have the property that $S-1$ divides $K-2=(S-1)(S+2)$,
meaning we can apply the above ideas whenever there exists a $\TD(K,M)$ where
$M=(S-1)(S+1)^2$.
Such TDs exist when $S=2,3$, being $\TD(6,9)$ and $\TD(12,32)$, respectively.
When $S=2$ in particular, the above methods yield an ETF of type $(6,-1,\frac15(9U+1))$ when either $U=1$, $U=6$, a $\BIBD(U,6,1)$ exists, or $U=6^J$ for some $J\geq 2$.
This final family arises from the fact that a $\TD(6,(9)6^j)$ exists for all $j\geq 1$:
there are at least $5$ MOLS of size $54=(9)6$,
at least $8$ MOLS of size $324=9(6^2)$ (since there are $8$ MOLS of size $9$, and at least $8$ MOLS of size $36$)~\cite{AbelCD07},
while for any $j\geq 3$, the number of MOLS of size $(9)6^j=2^j 3^{j+2}$ is at least $2^j-1\geq 7$.
Similarly,
taking $S=3$ yields an ETF of type $(12,-1,\frac1{11}(32U+1))$ when either $U=1$, $U=12$, a $\BIBD(U,12,1)$ exists, or $U=12^J$ for some $J\geq 2$:
the number of MOLS of size $32(12)=384$ and $32(12)^2=4608$ is at least $15$~\cite{AbelCD07}, while for $j\geq 3$, the number of MOLS of size $32(12)^j=2^{2j+5}3^j$ is at least $\min\set{2^{2j+5},3^j}-1\geq 26$.
\end{proof}

To be clear, we view the ETFs produced in Theorem~\ref{thm.new neg ETS with K>5} as a ``proof of concept," and believe that GDD experts will be able to find many more examples of new ETFs using Theorems~\ref{thm.new ETF} and~\ref{thm.known negative ETFs}.
For this reason, we have omitted technical cases where an ETF of type $(K,-1,S)$ and a $\TD(K,M)$ exist where $M=S(K-1)-1$, but $S-1$ does not divide $K-2$,
such as when $S=K=9$, and when $S=2K-1$ where $K=6,7,8,9,17,65537$.
In such cases, one can still combine the TD with a $\BIBD(U,K,1)$ via Lemma~\ref{lem.Wilson} to produce a $K$-GDD of type $M^U$,
but~\eqref{eq.new ETF 2} is not automatically satisfied.

We also point out that though Theorem~\ref{thm.new ETF} is a generalization of~\cite{DavisJ97},
the new ETFs we have found here are disjoint from those produced by another known generalization of this same work.
In particular, the parameters~\eqref{eq.Davis Jedwab parameters} of Davis-Jedwab difference sets can be regarded as the $Q=2$ case of the more general family:
\begin{equation}
\label{eq.Davis Jedwab Chen parameters}
D=Q^{2J-1}\paren{\tfrac{2Q^{2J}+Q-1}{Q+1}},
\quad
N=4Q^{2J}\paren{\tfrac{Q^{2J}-1}{Q^2-1}},
\end{equation}
where $J\geq 1$ and $Q\geq 2$ are integers.
In particular, in~\cite{Chen97},
Chen generalizes the theory of~\cite{DavisJ97} in a way that produces difference sets with parameters~\eqref{eq.Davis Jedwab Chen parameters} for any $J\geq 1$ and any $Q$ that is either a power of $3$ or any even power of an odd prime.
For this reason, difference sets with parameters of the form~\eqref{eq.Davis Jedwab Chen parameters} are said to be \textit{Davis-Jedwab-Chen difference sets}~\cite{JungnickelPS07}.
The inverse Welch bounds for the corresponding harmonic ETFs and their Naimark complements are always integers:
\begin{equation*}
S
=\bigbracket{\tfrac{D(N-1)}{N-D}}^{\frac12}
=\tfrac{2Q^{2J}+Q-1}{Q+1},
\quad
\bigbracket{\tfrac{(N-D)(N-1)}{D}}^{\frac12}
=\tfrac{2Q^{2J}-Q-1}{Q-1}.
\end{equation*}
However, such ETFs are seldom positive or negative.
Indeed, a direct computation reveals
\begin{equation*}
\tfrac{NS}{D(S+1)}
=2+\tfrac{2Q(Q^{2J-2}-1)}{(Q-1)(Q^{2J-1}+1)},
\quad
\tfrac{NS}{D(S-1)}
=2+\tfrac{2}{Q-1}.
\end{equation*}
Thus,
by Theorem~\ref{thm.parameter types},
the only such ETFs that are positive or negative are of type
$(2,1,2Q-1)$, $(3,-1,\tfrac12(9^J+1))$ or $(4,-1,\tfrac13(2^{2J+1}+1))$,
corresponding to the special cases of~\eqref{eq.Davis Jedwab Chen parameters} where $J=1$, $Q=3$ and $Q=2$, respectively.
That is, the only overlap between the $K$-negative ETFs constructed in Theorems~\ref{thm.new neg ETS with K=4,5} and~\ref{thm.new neg ETS with K>5} and the ETFs constructed in~\cite{Chen97} are those with parameters~\eqref{eq.Davis Jedwab Chen parameters}.

\section{Conclusions}

Theorems~\ref{thm.new neg ETS with K=4,5} and~\ref{thm.new neg ETS with K>5} make some incremental progress towards resolving the ETF existence problem.
By comparing the existing ETF literature, as summarized in~\cite{FickusM16} for example,
against Theorems~\ref{thm.known positive ETFs} and~\ref{thm.known negative ETFs},
we find that every known ETF is either an orthonormal basis,
or has the property that either it or its Naimark complement is a regular simplex, has $N=2D$ or $N=2D\pm1$, is a SIC-POVM, arises from a difference set, or is either positive or negative.
In particular, every known $\ETF(D,N)$ with $N>2D>2$ is either a SIC-POVM ($N=D^2$), or is a harmonic ETF, or has $N=2D+1$ where $D$ is odd, or is either positive or negative.
This fact, along with the known nonexistence of the $\ETF(3,8)$~\cite{Szollosi14},
and the available numerical evidence~\cite{TroppDHS05},
leads us to make the following conjecture:

\begin{conjecture}
\label{con.complex}
If $N>2D>2$, an $\ETF(D,N)$ exists if and only if $N=D^2$ or $\frac{D(D-1)}{N-1}\in\bbZ$ or $(D,N)$ is positive or negative (Definition~\ref{def.ETF types}).
\end{conjecture}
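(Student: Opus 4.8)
This is a conjecture rather than a theorem, so no proof is offered; instead we indicate what a proof would require and where the real obstruction lies. The statement is biconditional, and the two directions are open for essentially independent reasons.

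The forward (``if'') direction asserts that an $\ETF(D,N)$ exists for every $(D,N)$ with $N>2D>2$ satisfying one of the three listed conditions, and is thus a union of hard existence problems. The case $N=D^2$ is exactly Zauner's SIC-POVM conjecture. The case $\frac{D(D-1)}{N-1}\in\bbZ$ is strictly stronger than the existence of a difference set of order $N$ and cardinality $D$: it predicts, for instance, that an $\ETF(7,43)$ exists even though no $(43,7,1)$-difference set does, so it cannot be reduced to known harmonic constructions and would require genuinely new ones. Finally, the positive/negative case requires the constructions of Theorems~\ref{thm.known positive ETFs}, \ref{thm.known negative ETFs}, \ref{thm.new ETF}, \ref{thm.new neg ETS with K=4,5} and~\ref{thm.new neg ETS with K>5} to be pushed to \emph{every} admissible $(K,L,S)$ with $K\mid S(S-L)$; but even the special question of whether a $\BIBD(S(K-1)+1,K,1)$ exists for all admissible $(S,K)$ is a long-standing open problem in design theory. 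So the most one can realistically hope for here is a conditional result, quoting Zauner's conjecture, the explicit and asymptotic design/difference-set literature, and Theorem~\ref{thm.new ETF} as black boxes.

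The reverse (``only if'') direction is the substantive content, and the obstruction is stark: there is at present no general nonexistence machinery for complex ETFs. Beyond the Gerzon bound (which for $N>2D$ yields only $N\le D^2$) and the Welch bound~\eqref{eq.Welch bound}, the sole complex nonexistence result known is that an $\ETF(3,8)$ does not exist~\cite{Szollosi14}, proved by heavy computation in algebraic geometry. A structural line of attack would be to study the Gram matrix, which after the normalization~\eqref{eq.ETF scaling} takes the form $\bfPhi^*\bfPhi=S\bfI+\mathbf{S}$ with $S=[D(N-1)/(N-D)]^{\frac12}$ and $\mathbf{S}$ self-adjoint, zero-diagonal, unimodular off-diagonal, and possessing only two distinct eigenvalues. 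One would analyze the triple products $\mathbf{S}(n,n')\mathbf{S}(n',n'')\mathbf{S}(n'',n)$ and the small sub-configurations they generate, seeking to show that the two-eigenvalue and equiangularity constraints force $(D,N)$ into one of the three families---the complex analogue of the classification of two-graphs and strongly regular graphs that underlies the real case~\cite{Seidel76,Waldron09}. The difficulty is exactly that in the complex setting these triple products are unrestricted unit scalars, lacking the integrality that makes the real classification tractable, which is precisely why the problem is open.

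In short, the main obstacle is twofold: the forward direction is gated by Zauner's conjecture and by open existence questions for designs and difference sets, so it can only be established modulo those; and the reverse direction would require inventing a new nonexistence theory for complex ETFs, since current methods dispatch only isolated pairs $(D,N)$. A realistic near-term target is not the full conjecture but partial progress---for example, verifying it for all $(D,N)$ below some bound by combining the constructions of this paper with exhaustive Gr\"obner-basis searches in the spirit of~\cite{Szollosi14}, or proving the ``only if'' direction under a structural hypothesis---such as the presence of many mutually orthogonal $\bfphi_n$---that restores enough rigidity for the sub-configuration analysis to close.
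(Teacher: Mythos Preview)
You are correct that this is a conjecture, and the paper offers no proof of it either. The paper's entire treatment consists of the sentence of motivation preceding the statement (every known $\ETF(D,N)$ with $N>2D>2$ is a SIC-POVM, a harmonic ETF, has $N=2D+1$ with $D$ odd, or is positive or negative; together with the nonexistence of $\ETF(3,8)$ and numerical evidence from~\cite{TroppDHS05}) and a brief paragraph afterward pointing to $\ETF(9,25)$ as the smallest test case and noting several $(D,N)$ for which $N-1$ divides $D(D-1)$ and an ETF exists despite the nonexistence of a corresponding difference set. Your discussion of the obstructions is considerably more detailed than anything in the paper and is broadly accurate; in particular your identification of Zauner's conjecture, open design-existence questions, and the absence of complex nonexistence machinery as the three bottlenecks matches the state of the art.

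One small correction of emphasis: in the paper's list after the conjecture, the parameters such as $(35,120)$ and $(63,280)$ are examples where an ETF \emph{does} exist (via the positive/negative constructions) even though no difference set of that size exists, so the divisibility condition $\tfrac{D(D-1)}{N-1}\in\bbZ$ is not being proposed as a stand-alone sufficient condition needing new constructions---rather, the paper is pointing out that the three conditions overlap substantially, and that $(9,25)$ is the first genuinely uncovered case. Your example $(7,43)$ is in the same spirit, but note that the paper singles out $(9,25)$ precisely because it is the smallest such $D$.
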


This is a substantial strengthening of an earlier conjecture made by one of the authors,
namely that if $N>D>1$ and an $\ETF(D,N)$ exists, then one of the three numbers $D$, $N-D$ and $N-1$ necessarily divides the product of the other two.
To help resolve Conjecture~\ref{con.complex}, it would in particular be good to know whether an $\ETF(9,25)$ exists: this is the smallest value of $D$ for which there exists an $N$ such that $N-1$ divides $D(D-1)$ but no known ETF exists.
For context, we note that there are numerous pairs $(D,N)$ for which $N-1$ divides $D(D-1)$ and an $\ETF(D,N)$ is known to exist, despite the fact that a difference set of that size does not exist~\cite{Gordon18}, including
\begin{gather*}
(35,120), (40,105), (45,100), (63,280), (70,231), (77,210), (91,196), (99,540), (130,560),\\
 (143,924), (176,561), (187,528), (208,1105), (231,484), (247,780), (260,741).
\end{gather*}

It would also be good to know whether an $\ETF(11,33)$ exists,
since such an ETF would be type $(4,-1,4)$:
as detailed in the previous section, an ETF of type $(K,L,S)$ can only exist when $K$ divides $S(S-L)$, and moreover this necessary condition for existence is known to be sufficient when $L=1$ and $K=1,2,3,4,5$, as well as when $L=-1$ and $K=2,3$.

The evidence also supports an analogous conjecture in the real case.
In fact, comparing the relevant literature~\cite{Brouwer17,FickusM16} against Theorems~\ref{thm.known positive ETFs} and~\ref{thm.known negative ETFs},
we find that every known real $\ETF(D,N)$ with $N>2D>2$ is either positive or negative.
Moreover, when $1<D<N-1$, $N\neq 2D$ and a real $\ETF(D,N)$ exists,
then it and its Naimark complements' Welch bounds are necessarily the reciprocals of odd integers~\cite{SustikTDH07}.
In particular, any such ETF automatically satisfies one of the two integrality conditions given in Theorem~\ref{thm.parameter types} that characterize positive and negative ETFs.
Conversely, since $S(K-1)+KL$ is odd whenever $S$ is odd,
Theorem~\ref{thm.parameter types} implies that any ETF of type $(K,L,S)$ satisfies the necessary conditions of~\cite{SustikTDH07} when $S$ is odd.
That said,
real $\ETF(19,76)$, $\ETF(20,96)$ and $\ETF(47,1128)$ do not exist~\cite{AzarijaM15,AzarijaM16,FickusM16},
despite the fact that $(19,76)$ is type $(5,-1,5)$,
$(20,96)$ is both type $(4,1,5)$ and type $(6,-1,5)$,
and $(47,1128)$ is both type $(21,1,7)$ and type $(28,-1,7)$.
These facts suggest the following analog of Conjecture~\ref{con.complex}:

\begin{conjecture}
If $N>2D>2$ and a real $\ETF(D,N)$ exists, then $(D,N)$ is positive or negative.
\end{conjecture}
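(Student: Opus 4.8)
Since the final statement is a conjecture rather than a theorem, what follows is a line of attack rather than a complete proof; the plan reduces the conjecture to a hard open problem on strongly regular graphs (SRGs), which I expect to be the main obstacle.

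The plan is to pass to the language of two-graphs and SRGs. Given a real $\ETF(D,N)$ with $N>2D>2$, rescale as in~\eqref{eq.ETF scaling} and~\eqref{eq.norms of pos or neg ETFs} so that $\abs{\ip{\bfphi_n}{\bfphi_{n'}}}=1$ and $\norm{\bfphi_n}^2=S:=\bigbracket{\tfrac{D(N-1)}{N-D}}^{\frac12}$; then $\bfPhi^*\bfPhi-S\bfI$ is a Seidel matrix (symmetric, zero diagonal, $\pm1$ off the diagonal) with exactly the two eigenvalues $S':=\bigbracket{\tfrac{(N-D)(N-1)}{D}}^{\frac12}$ of multiplicity $D$ and $-S$ of multiplicity $N-D$. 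Equivalently, the $\ETF$ is a regular two-graph on $N$ vertices, and its descendants are SRGs on $N-1$ vertices with restricted eigenvalues $r=\tfrac12(S-1)$ and $s=-\tfrac12(S'+1)$, valency $k=\tfrac12(S-1)(S'+1)$ (equal to twice its $\mu$-parameter, as for every two-graph descendant), and eigenvalue multiplicities $f=\tfrac{S(S'^2-1)}{S+S'}$ and $g=\tfrac{S'(S^2-1)}{S+S'}$. Here $SS'=N-1$ and $D=\tfrac{SN}{S+S'}$, so by Theorem~\ref{thm.parameter types} the integer $K=\tfrac{NS}{D(S+L)}$ equals $\tfrac{S+S'}{S+L}$; thus $(D,N)$ is positive exactly when $S,S'\in\bbZ$ and $S+1\mid S+S'$, and negative exactly when $S,S'\in\bbZ$ and $S-1\mid S+S'$.

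I would first dispatch the non-integral case. Since $N>2D>2$ forces $1<D<N-1$ and $N\neq 2D$, the result of~\cite{SustikTDH07} applies: the Welch bound $\tfrac1S$ of the $\ETF$ and the Welch bound $\tfrac1{S'}$ of its Naimark complement are reciprocals of odd integers, so $S$ and $S'$ are odd positive integers. By the previous paragraph it therefore remains only to show that the existence of the associated SRG forces $S+1\mid S+S'$ or $S-1\mid S+S'$.

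This last divisibility is the hard part, and I do not see how to obtain it from the usual feasibility conditions on the descendant SRG. Integrality of $f$ and $g$ is automatic from $D\in\bbZ$ and yields nothing new, while the Krein conditions and the absolute bound are far too weak: for example the pair $(S,S')=(7,21)$, which gives $(D,N)=(37,148)$ --- neither positive nor negative --- produces the parameter set $\mathrm{SRG}(147,66,25,33)$, and this passes every one of those tests. A complete proof would therefore have to rule out the entire family of ``rogue'' parameter sets arising from odd $S<S'$ with $\tfrac{S(SS'+1)}{S+S'}\in\bbZ$ but $S\pm1\nmid S+S'$, which appears to require genuinely new nonexistence tools --- e.g.\ the algebro-geometric methods of~\cite{Szollosi14}, or integrality constraints on triple intersection numbers of the underlying association scheme --- that go well beyond the classical SRG toolkit and would in passing settle many currently open SRG existence questions. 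Extracting such an extra constraint that forces $K\in\bbZ$ is, I expect, exactly the crux, and it is what has so far kept the conjecture open.
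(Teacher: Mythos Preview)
The paper does not prove this statement: it is stated as a \emph{conjecture}, with no proof offered. The only supporting material in the paper is the empirical observation that every known real $\ETF(D,N)$ with $N>2D>2$ is positive or negative, the integrality result of~\cite{SustikTDH07} (which you correctly invoke to force $S$ and $S'$ to be odd integers), and the remark that the nonexistence of real $\ETF(19,76)$, $\ETF(20,96)$ and $\ETF(47,1128)$ shows the converse fails.

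Your write-up is therefore appropriate in spirit: you correctly flag that no complete proof is being offered, and your reduction is accurate. In particular, your identity $K=\tfrac{S+S'}{S+L}$ follows from~\eqref{eq.norms of pos or neg ETFs}, so the conjecture is equivalent to the divisibility $S+1\mid S+S'$ or $S-1\mid S+S'$ once~\cite{SustikTDH07} has been applied, and your example $(S,S')=(7,21)$, i.e.\ $(D,N)=(37,148)$, is a genuine instance that is neither positive nor negative yet survives the standard SRG feasibility tests. The paper does not go beyond this; it simply leaves the conjecture open, so there is no ``paper's proof'' to compare against, and your assessment that the obstruction is precisely the lack of tools to rule out such rogue parameter sets is consistent with the authors' own framing.
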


\section*{Acknowledgments}
The views expressed in this article are those of the authors and do not reflect the official policy or position of the United States Air Force, Department of Defense, or the United States Government.
This work was partially supported by the Summer Faculty Fellowship Program of the United States Air Force Research Laboratory.

\end{document}